\newtheorem{theorem}{Theorem}
\theoremstyle{plain}
\newtheorem{conjecture}{Conjecture}
\newtheorem{corollary}{Corollary}
\newtheorem{example}{Example}
\newtheorem{proposition}{Proposition}
\newtheorem{remark}{Remark}
\numberwithin{equation}{section}
\begin{document}
\title[ ]{On compact Riemannian manifolds with convex boundary and Ricci curvature
bounded from below}
\author{Xiaodong Wang}
\address{Department of Mathematics, Michigan State University, East Lansing, MI 48824}
\email{xwang@math.msu.edu}

\begin{abstract}
We propose to study positive harmoninc functions satisfying a nonlinear Neuman
condition on a compact Riemannian manifold with nonnegative Ricci curvature
and strictly convex boundary. A precise conjecture is formulated. We discuss
its implications and present some partial results. Related questions are
discussed for compact Riemannian manifolds with positive Ricci curvature and
convex boundary.

\end{abstract}
\maketitle

\section{Introduction}

For a compact Riemannian manifold $\left(  M^{n},g\right)  $ with nonempty
boundary $\Sigma=\partial M$, it is interesting to study connections between
the intrinsic geometry $g|_{\Sigma}$ and the extrinsic geometry (the 2nd
fundamental form), under a lower bound for scalar curvature or Ricci
curvature. We refer to \cite{ST1, ST2, WY1, MW} and references therein for
recent works in this direction. Some of these works are motivated by problems
in general relativity, in particular about understanding various definitions
of quasi-local mass. The following fundamental result was proved by Shi and
Tam \cite{ST1}.

\begin{theorem}
\label{ST}Let $\left(  M^{n},g\right)  $ be a compact Riemannian manifold with
scalar curvature $R\geq0$ and with a connected boundary $\Sigma$. Suppose

\begin{itemize}
\item $M$ is spin,

\item the mean curvature $H$ of $\Sigma$ is positive,

\item there exists an isometric embedding $\iota:\Sigma\rightarrow
\mathbb{R}^{n}$ as a strictly convex hypersurface.
\end{itemize}

Then
\begin{equation}
\int_{\Sigma}H\leq\int_{\Sigma}H_{0}, \label{ste}%
\end{equation}
where $H_{0}$ is the mean curvature of $\iota:\Sigma\rightarrow\mathbb{R}^{n}%
$. Moreover, if equality holds, then $M$ is isometric to the Euclidean domain
enclosed by $\iota:\Sigma\rightarrow\mathbb{R}^{n}$.
\end{theorem}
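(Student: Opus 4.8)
The plan is to realize the prescribed boundary data as the corner of an asymptotically flat manifold of nonnegative scalar curvature and then invoke the positive mass theorem. Write $\Omega \subset \mathbb{R}^n$ for the convex domain bounded by $\iota(\Sigma)$, and foliate the exterior $\mathbb{R}^n \setminus \Omega$ by the level sets $\Sigma_r$ of the distance function to $\iota(\Sigma)$; strict convexity guarantees that this foliation is smooth for all $r \geq 0$ and that the leaves converge, after rescaling, to round spheres, so the end is asymptotically flat. On the exterior I would look for a scalar-flat metric of the form $g_u = u^2\, dr^2 + g_r$, where $g_r$ is the induced Euclidean metric on $\Sigma_r$ and $u = u(r,\cdot) > 0$ is to be determined. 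Imposing $R(g_u) = 0$ reduces to a parabolic equation for $u$ with $r$ playing the role of the time variable, and I would prescribe the initial condition $u|_{r=0} = H_0/H$, which is legitimate since $H > 0$ and is chosen precisely so that the mean curvature of $\Sigma$ computed in $g_u$ from the exterior side equals the given $H$.

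Granting the existence, regularity, and Euclidean asymptotics of this solution, the next step is to glue $(M,g)$ to the exterior $\left([0,\infty)\times\Sigma, g_u\right)$ along $\Sigma$. The induced metrics match by construction and the mean curvatures match because of the initial condition, so the resulting manifold $N$ is asymptotically flat, smooth away from the corner $\Sigma$, with scalar curvature nonnegative on $M$ and zero on the exterior. I would then establish the Shi--Tam monotonicity: setting $H_u = H_0/u$ for the exterior mean curvature and defining $m(r) = c_n \int_{\Sigma_r}(H_0 - H_u)\, dA$ with $c_n$ the dimensional normalization, the scalar-flat equation forces $m(r)$ to be nonincreasing in $r$, with $\lim_{r\to\infty} m(r)$ equal to the ADM mass of the end. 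In particular $m(0) = c_n \int_\Sigma (H_0 - H)\, dA \geq \lim_{r\to\infty} m(r) = m_{\mathrm{ADM}}(N)$.

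To close the argument I would apply the positive mass theorem to $N$. This is exactly where the spin hypothesis enters: Witten's spinorial proof gives the positive mass theorem in all dimensions for spin manifolds, and because the two sides of the corner share the same induced metric with matching mean curvatures, the distributional scalar curvature of $N$ is nonnegative across $\Sigma$, so the spinor argument (or a prior smoothing of the corner that preserves $R \geq 0$) applies and yields $m_{\mathrm{ADM}}(N) \geq 0$. Combining with the monotonicity gives $c_n \int_\Sigma (H_0 - H) \geq 0$, that is $\int_\Sigma H \leq \int_\Sigma H_0$. For rigidity, equality forces $m(0) = 0$; since $m$ is nonincreasing and bounded below by $m_{\mathrm{ADM}} \geq 0$, it must vanish identically, the equality case of the monotonicity formula forces the exterior metric to be flat, and $m_{\mathrm{ADM}}(N) = 0$ together with the rigidity in the positive mass theorem forces $N$ to be isometric to $\mathbb{R}^n$; hence $M$ is isometric to $\Omega$.

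The \emph{main obstacle} I anticipate is the PDE analysis for the exterior metric: proving long-time existence for the parabolic equation satisfied by $u$, controlling its asymptotics precisely enough to identify $\lim_{r\to\infty} m(r)$ with the ADM mass, and deriving the monotonicity formula together with its sharp rigidity. The convexity of $\iota(\Sigma)$ is essential throughout, since it is what makes the exterior foliation globally smooth and asymptotically round. A secondary technical point is justifying the positive mass theorem across the Lipschitz corner, which I would handle either through the distributional/spinorial formulation or by smoothing the metric near $\Sigma$ while keeping the scalar curvature nonnegative.
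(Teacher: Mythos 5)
The paper does not prove this theorem itself; it quotes it from Shi--Tam \cite{ST1}. Your outline is precisely the Shi--Tam argument --- the quasi-spherical scalar-flat metric $u^2\,dr^2+g_r$ on the exterior foliation of the convex body, the initial condition $u|_{r=0}=H_0/H$ to match mean curvatures at the corner, the monotonicity of $\int_{\Sigma_r}(H_0-H_u)$ down to the ADM mass, and the spin positive mass theorem applied across the Lipschitz corner --- so it is correct and essentially identical to the cited proof.
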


The right hand side of (\ref{ste}) is determined by the intrinsic geometry of
$\Sigma$. Therefore by the inequality the extrinsic geometry of $\Sigma$ is
constrained by its intrinsic geometry. But the assumption that there is an
isometric embedding of $\Sigma$ into $\mathbb{R}^{n}$ as a strictly convex
hypersurface imposes severe restriction on the kind of intrinsic geometry of
$\Sigma$ for which the theorem is applicable. In the more recent work
\cite{MW} Miao and I proved a slightly different inequality under the stronger
condition $Ric\geq0$, but without any restriction on the intrinsic geometry of
the boundary.

\begin{theorem}
\label{MW}\bigskip Let $\left(  M^{n},g\right)  $ be a compact Riemannian
manifold with $Ric\geq0$ and with a connected boundary $\Sigma$ that has
positive mean curvature $H$. Let $\iota:\Sigma\rightarrow\mathbb{R}^{m}$ be an
isometric embedding. Then
\begin{equation}
\int_{\Sigma}H\leq\int_{\Sigma}\frac{\left\vert \overrightarrow{H}%
_{0}\right\vert ^{2}}{H},
\end{equation}
where $\overrightarrow{H}_{0}$ is the mean curvature vector of $\iota
:\Sigma\rightarrow\mathbb{R}^{m}$. Moreover, if equality holds, then
$\iota\left(  \Sigma\right)  $ is contained in an $n$-dimensional plane of
$\mathbb{R}^{m}$ and $M$ is isometric to the Euclidean domain enclosed by
$\iota\left(  \Sigma\right)  $ in that $n$-dimensional plane.
\end{theorem}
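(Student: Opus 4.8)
The plan is to feed the harmonic extensions of the coordinate functions of $\iota$ into Reilly's formula. Writing $\iota=(\iota^{1},\dots,\iota^{m})$, for each $\alpha$ I would solve the Dirichlet problem $\Delta f^{\alpha}=0$ in $M$, $f^{\alpha}|_{\Sigma}=\iota^{\alpha}$ (uniquely solvable since $M$ is compact with boundary). Reilly's formula applied to $f^{\alpha}$, using $\Delta f^{\alpha}=0$, reads
\[
\int_{\Sigma}\Bigl[\,H(f^{\alpha}_{\nu})^{2}+2f^{\alpha}_{\nu}\,\Delta_{\Sigma}\iota^{\alpha}+\mathrm{II}(\nabla_{\Sigma}\iota^{\alpha},\nabla_{\Sigma}\iota^{\alpha})\Bigr]=-\int_{M}\Bigl[\,|\nabla^{2}f^{\alpha}|^{2}+\mathrm{Ric}(\nabla f^{\alpha},\nabla f^{\alpha})\Bigr],
\]
where $f^{\alpha}_{\nu}=\partial_{\nu}f^{\alpha}$ and $\mathrm{II},H$ are the second fundamental form and mean curvature of $\Sigma\subset M$ (signs fixed so that $H>0$).

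Now I would sum over $\alpha$ and exploit two consequences of $\iota$ being isometric. First, $\sum_{\alpha}\nabla_{\Sigma}\iota^{\alpha}\otimes\nabla_{\Sigma}\iota^{\alpha}=g_{\Sigma}$, so tracing against $\mathrm{II}$ gives $\sum_{\alpha}\mathrm{II}(\nabla_{\Sigma}\iota^{\alpha},\nabla_{\Sigma}\iota^{\alpha})=H$. Second, the components $\Delta_{\Sigma}\iota^{\alpha}$ assemble into the mean curvature vector $\overrightarrow{H}_{0}$ of $\iota$. Writing $\phi=(f^{1}_{\nu},\dots,f^{m}_{\nu})\colon\Sigma\to\mathbb{R}^{m}$ and completing the square in the summed boundary integrand $2\langle\phi,\overrightarrow{H}_{0}\rangle+H|\phi|^{2}+H$, I obtain
\[
\int_{\Sigma}H+\int_{\Sigma}H\Bigl|\phi+\tfrac{\overrightarrow{H}_{0}}{H}\Bigr|^{2}=\int_{\Sigma}\frac{|\overrightarrow{H}_{0}|^{2}}{H}-\sum_{\alpha}\int_{M}\Bigl[\,|\nabla^{2}f^{\alpha}|^{2}+\mathrm{Ric}(\nabla f^{\alpha},\nabla f^{\alpha})\Bigr].
\]
Since $\mathrm{Ric}\ge0$ and $H>0$, the last integral is $\le0$ and the square term is $\ge0$; discarding both nonnegative contributions yields the asserted inequality.

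For the rigidity statement, equality forces $\nabla^{2}f^{\alpha}=0$ for every $\alpha$, $\mathrm{Ric}(\nabla f^{\alpha},\nabla f^{\alpha})=0$, and $\phi=-\overrightarrow{H}_{0}/H$ on $\Sigma$. I would then study the map $F=(f^{1},\dots,f^{m})\colon M\to\mathbb{R}^{m}$ and the pulled-back symmetric tensor $G=\sum_{\alpha}df^{\alpha}\otimes df^{\alpha}=F^{*}g_{\mathrm{eucl}}$; since each $\nabla f^{\alpha}$ is parallel, $G$ is parallel. Along $\Sigma$ its tangential block equals $g_{\Sigma}$ (isometric embedding), its mixed block satisfies $G(\nu,X)=-\tfrac1H\langle\overrightarrow{H}_{0},d\iota(X)\rangle=0$ because $\overrightarrow{H}_{0}$ is normal to $\iota(\Sigma)$, and $G(\nu,\nu)=|\phi|^{2}=|\overrightarrow{H}_{0}|^{2}/H^{2}$.

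It remains to identify $M$. Differentiating $G$ along $\Sigma$ and using $\nabla G=0$ together with $G(\nu,T\Sigma)=0$ shows $G(\nu,\nu)$ is constant on $\Sigma$; the equality $\int_{\Sigma}H=\int_{\Sigma}|\overrightarrow{H}_{0}|^{2}/H$ then pins that constant to $1$, so $G=g$ on all of $\Sigma$. Two parallel tensors agreeing at the points of the connected manifold $M$ coincide, hence $G=g$ everywhere and $F$ is an isometric immersion; as $\nabla dF=0$ it is totally geodesic, forcing $M$ to be flat with image in an $n$-plane $P\subset\mathbb{R}^{m}$. Finally, $F|_{\Sigma}=\iota$ embeds $\Sigma$ as $\partial\Omega$ for a bounded domain $\Omega\subset P$ (Jordan--Brouwer), and a degree/properness argument upgrades the local isometry $F$ to a global isometry $M\cong\overline{\Omega}$. \textbf{The main obstacle} I anticipate is the equality analysis---specifically pinning the normal--normal value $G(\nu,\nu)=1$ (which is exactly where the constancy from parallelism meets the integral equality) and then converting the totally geodesic isometric immersion into the claimed global isometry onto the enclosed domain.
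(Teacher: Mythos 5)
Your proposal is correct and follows essentially the same route as the original proof in Miao--Wang \cite{MW}, which this paper quotes as Theorem \ref{MW} without reproving: harmonically extend the coordinate functions of $\iota$, apply Reilly's formula (the same identity the paper uses to prove Proposition \ref{Reilly}), use $\Delta_{\Sigma}\iota=\overrightarrow{H}_{0}$ and $\sum_{\alpha}d\iota^{\alpha}\otimes d\iota^{\alpha}=g_{\Sigma}$, complete the square in the boundary term, and in the equality case exploit parallelism of $\sum_{\alpha}df^{\alpha}\otimes df^{\alpha}$ to pin $G(\nu,\nu)=1$ and obtain a totally geodesic isometric immersion into an $n$-plane. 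The final upgrade from local isometry to global isometry onto the enclosed domain, which you flag as the main remaining obstacle, is handled in \cite{MW} by exactly the kind of degree/properness argument you describe.
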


Notice that an isometric embedding $\iota:\Sigma\rightarrow\mathbb{R}^{m}$
always exists by Nash's famous theorem.

When the scalar curvature has a negative lower bound, results similar to
Theorem \ref{ST} were proved by \cite{WY1} and \cite{ST2}. The counterexample
to the Min-Oo conjecture by Brendle, Marques and Neves\cite{BMN} shows that no
such result holds when the scalar curvature has a positive lower bound.
Results similar to Theorem \ref{MW} are also established when the Ricci
curvature has a positive or negative lower bound. But in these two cases the
inequalities obtained are not sharp. Some rigidity results under stronger
assumptions on the boundary were proved in \cite{MW}.

In all of these studies the result is basically an estimate on an integral
involving the mean curvature. It is natural to ask if one can bound the area
of the boundary, the volume of the interior and other more direct geometric or
analytic quantities. It is easy to see that for such results to hold a lower
bound for the mean curvature is not enough. For example, for any closed
$\left(  \Sigma^{n-2},h\right)  $ with nonnegative Ricci curvature,
$M:=\overline{B^{2}}\times\Sigma$ with the product metric $dx^{2}+h$ has
nonnegative Ricci curvature and mean curvature $H\geq1$ while the area of
$\partial M$ can be arbitrarily large. Therefore we will in this paper mostly
consider compact Riemannian manifolds $\left(  M^{n},g\right)  $ with
nonnegative Ricci curvature and with a connected boundary $\Sigma$ whose 2nd
fundamental form has a positive lower bound. Motivated by a uniqueness theorem
in \cite{BVV}, we study positive harmonic functions on $M$ that satisfy a
semilinear Neumann condition on the boundary. We formulate a conjecture which
has important geometric implications. We will prove some partial results that
support this conjecture.\footnote{After this paper was posted to the arXiv,
new progress has been made in the following two papers: 1. Q. Guo and X. Wang,
Uniqueness results for positive harmonic functions on $\overline
{\mathbb{B}^{n}}$ satisfying a nonlinear boundary condition, arXiv:1912.05568.
2. Q. Guo, F. Hang and X. Wang, Liouville type theorems on manifolds with
nonnegative curvature and strictly convex boundary, arXiv:1912.05574} Another
case we consider is when $M$ has positive Ricci curvature, by scaling we can
always assume $Ric\geq n-1$ and the boundary $\Sigma$ is convex in the sense
that its 2nd fundamental form is nonnegative. There is similarly a natural
conjecture on the area of the boundary.

The paper is organized as follows. In section 2 we discuss some natural PDEs
on a compact manifold with boundary. We formulate a uniqueness conjecture on a
semilinear Neumann problem in the nonnegative Ricci case and discuss its
geometric implications. In Section 3 we prove some topological results. In
Section 4 we present some partial results and several other conjectures.

Acknowledgement.\textbf{ }The work of the author is partially supported by
Simons Foundation Collaboration Grant for Mathematicians \#312820.

\section{\bigskip from PDE to geometry: a conjecture}

We first recall a theorem proved by Bidaut-Veron and Veron \cite{BVV} \ .

\begin{theorem}
\label{bvv}(\cite{BVV} and \cite{I}) Let $\left(  M^{n},g\right)  $ be a
compact Riemannian manifold with a (possibly empty) convex boundary. Suppose
$u\in C^{\infty}\left(  M\right)  $ is a positive solution of the following
equation%
\[%
\begin{array}
[c]{ccc}%
-\Delta u+\lambda u=u^{q} & \text{on} & M,\\
\frac{\partial u}{\partial\nu}=0 & \text{on} & \partial M,
\end{array}
\]
where $\lambda>0$ is a constant and $1<q\leq\left(  n+2\right)  /\left(
n-2\right)  $. If $Ric\geq\frac{\left(  n-1\right)  \left(  q-1\right)
\lambda}{n}g$, then $u$ must be constant unless $q=\left(  n+2\right)
/\left(  n-2\right)  $ and $\left(  M^{n},g\right)  $ is isometric to $\left(
\mathbb{S}^{n},\frac{4\lambda}{n\left(  n-2\right)  }g_{0}\right)  $ or
$\left(  \mathbb{S}_{+}^{n},\frac{4\lambda}{n\left(  n-2\right)  }%
g_{0}\right)  $. In the latter case $u$ is given on $\mathbb{S}^{n}$ or
$\mathbb{S}_{+}^{n}$\ by the following formula%
\[
u=\frac{1}{\left(  a+x\cdot\xi\right)  ^{\left(  n-2\right)  /2}}.
\]
for some $\xi\in\mathbb{R}^{n+1}$ and some constant $a>\left\vert
\xi\right\vert $.
\end{theorem}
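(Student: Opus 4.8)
The natural route is the integral Bochner method with an integrating factor, in the spirit of Lichnerowicz--Obata and adapted to this semilinear problem by Bidaut-V\'{e}ron and V\'{e}ron. I would begin from Bochner's formula
\[
\tfrac{1}{2}\Delta\left\vert \nabla u\right\vert ^{2}=\left\vert \nabla
^{2}u\right\vert ^{2}+\left\langle \nabla u,\nabla\Delta u\right\rangle
+Ric\left(  \nabla u,\nabla u\right)  ,
\]
and immediately use the PDE to rewrite the middle term: since $\Delta
u=\lambda u-u^{q}$, we have $\nabla\Delta u=\left(  \lambda-qu^{q-1}\right)
\nabla u$, so $\left\langle \nabla u,\nabla\Delta u\right\rangle =\left(
\lambda-qu^{q-1}\right)  \left\vert \nabla u\right\vert ^{2}$. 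The idea is then
to multiply this identity by a power weight $u^{-\beta}$, with $\beta$ to be
chosen, and integrate over $M$, simultaneously recording the analogous
weighted integral of $\left(  \Delta u\right)  ^{2}$ obtained by multiplying
the equation itself by $u^{-\beta}\Delta u$ (or a suitable variant). The
orthogonal decomposition $\left\vert \nabla^{2}u\right\vert ^{2}=\left\vert
\nabla^{2}u-\tfrac{\Delta u}{n}g\right\vert ^{2}+\tfrac{1}{n}\left(  \Delta
u\right)  ^{2}$ should be used to isolate the traceless Hessian.

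The boundary terms are where the hypotheses enter. Integrating
$\tfrac{1}{2}\int_{M}u^{-\beta}\Delta\left\vert \nabla u\right\vert ^{2}$ by
parts produces $\tfrac{1}{2}\int_{\Sigma}u^{-\beta}\frac{\partial}{\partial
\nu}\left\vert \nabla u\right\vert ^{2}$. On $\Sigma$ the Neumann condition
$\partial u/\partial\nu=0$ forces $\nabla u$ to be tangent, and the standard
identity for the normal derivative of a tangential gradient gives
$\frac{\partial}{\partial\nu}\left\vert \nabla u\right\vert ^{2}=2\nabla
^{2}u\left(  \nabla u,\nu\right)  =-2\,\mathrm{II}\left(  \nabla^{\Sigma
}u,\nabla^{\Sigma}u\right)  $, which is $\leq0$ precisely because $\Sigma$ is
convex. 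Thus the entire boundary contribution has a definite favorable sign,
and the remaining interior integrations by parts (transferring derivatives off
$\left\vert \nabla u\right\vert ^{2}$ onto $u^{-\beta}$) introduce the cubic
and quartic terms in $u$ that must be balanced.

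The crux is to choose $\beta$ so that all the nonlinear-in-$u$ terms cancel.
With the correct exponent the computation should collapse to an identity of
the schematic form
\[
\int_{M}u^{-\beta}\left[  \left\vert \nabla^{2}u-\tfrac{\Delta u}{n}
g\right\vert ^{2}+\left(  Ric-\tfrac{\left(  n-1\right)  \left(  q-1\right)
\lambda}{n}g\right)  \left(  \nabla u,\nabla u\right)  \right]  dV
=\int_{\Sigma}u^{-\beta}\,\mathrm{II}\left(  \nabla^{\Sigma}u,\nabla^{\Sigma
}u\right)  ,
\]
in which the sharp Ricci constant appears automatically from the algebra (this
is exactly why the constant is $\tfrac{\left(  n-1\right)  \left(  q-1\right)
\lambda}{n}$). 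Under the assumption $Ric\geq\tfrac{\left(  n-1\right)  \left(
q-1\right)  \lambda}{n}g$ the left integrand is nonnegative while the right
side is nonpositive, so both must vanish identically: the traceless Hessian is
zero everywhere, i.e. $\nabla^{2}u=\tfrac{\Delta u}{n}g$, the Ricci inequality
is saturated in the direction of $\nabla u$, and $\mathrm{II}\left(
\nabla^{\Sigma}u,\nabla^{\Sigma}u\right)  \equiv0$ on $\Sigma$.

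The final step is the rigidity analysis. If $\nabla u\equiv0$ then $u$ is
constant, which is the generic conclusion. Otherwise $u$ is a nonconstant
function with Hessian pointwise proportional to $g$; by Obata's theorem this
forces $\left(  M,g\right)  $ to be a round sphere (closed case) or, when the
boundary is nonempty, the boundary term makes $\Sigma$ totally geodesic along
$\nabla^{\Sigma}u$ and identifies it with an equator, giving a hemisphere. One
then solves $\nabla^{2}u=\tfrac{\Delta u}{n}g$ explicitly on the sphere to
recover $u=\left(  a+x\cdot\xi\right)  ^{-\left(  n-2\right)  /2}$, and checks
that this nonconstant solution is compatible with the equation only at the
critical exponent $q=\left(  n+2\right)  /\left(  n-2\right)  $, so the
exceptional case arises exactly there. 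I expect the main obstacle to be the
bookkeeping in choosing $\beta$ and verifying the exact cancellation that
yields the sharp constant, together with the careful use of a refined
Kato-type inequality at the critical exponent to pin down the equality case
rather than merely the inequality.
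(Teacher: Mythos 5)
A preliminary remark: the paper contains no proof of Theorem \ref{bvv} to compare against --- it is quoted from Bidaut-V\'eron--V\'eron \cite{BVV} (closed case) and Ilias \cite{I} (boundary case). Measured against the method of those references, your outline is the right one: a Bochner identity weighted by a power of $u$, the decomposition into traceless Hessian plus $\frac{1}{n}(\Delta u)^2$, a boundary term whose sign is controlled by convexity, and an Obata-type rigidity step. In particular your boundary computation $\frac{\partial}{\partial\nu}\left\vert\nabla u\right\vert^{2}=2\nabla^{2}u(\nabla u,\nu)=-2\,\mathrm{II}(\nabla^{\Sigma}u,\nabla^{\Sigma}u)\leq0$ is exactly Ilias's observation that lets the closed-manifold argument of \cite{BVV} go through when $\partial M\neq\varnothing$ is convex.

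The genuine gap is in your ``crux'' step. The hypothesis $1<q\leq(n+2)/(n-2)$ is never used anywhere in your argument --- a structural warning sign, since the method (and the sharpness of the statement) depends on it. The clean identity you posit, in which all nonlinear-in-$u$ terms cancel and only the traceless Hessian, the Ricci defect, and the boundary term survive, is not what the computation actually yields. For the optimal choice of the weight exponent $\beta$ the integral identity carries an additional term of the form $A\int_{M}u^{-\beta-2}\left\vert\nabla u\right\vert^{4}$, where $A$ is an explicit quadratic expression in $q$ (depending on $n$ and $\beta$) that is nonnegative precisely when $q\leq(n+2)/(n-2)$ and vanishes only at the critical exponent. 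That sign condition is where the exponent restriction enters; moreover, the strict positivity of $A$ in the subcritical range is what forces $\nabla u\equiv0$ directly (so $u$ is constant for $q<(n+2)/(n-2)$), rather than your proposed route of first invoking Obata and then checking a posteriori that a nonconstant solution is incompatible with a subcritical $q$. So the plan is correct in spirit but cannot close as written: you must carry the quartic gradient term through the integrations by parts, verify the sign of its coefficient, and let that term do the double duty of encoding the exponent restriction and settling the subcritical case. (The ``refined Kato-type inequality'' you invoke at the end plays no role in this argument; the only pointwise inequality needed is $\left\vert\nabla^{2}u\right\vert^{2}\geq\frac{1}{n}(\Delta u)^{2}$.)
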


\ This theorem was proved by Bidaut-Veron and Veron \cite{BVV} when $\partial
M=\varnothing$ and by Ilias \cite{I} when $\partial M\neq\varnothing$ using
the same method. It has some important corollaries. We focus on the case
$\partial M\neq\varnothing$. We recall the Yamabe problem on a compact
Riemannian manifold $\left(  M^{n},g\right)  $ with boundary. The conformal
Laplacian is defined to be $L_{g}=-c_{n}\Delta_{g}+R_{g}$, with $c_{n}%
=\frac{4\left(  n-1\right)  }{n-2}$. If $\widetilde{g}=\phi^{4/\left(
n-2\right)  }g$, then
\[
L_{\widetilde{g}}u=\phi^{-\left(  n+2\right)  /\left(  n-2\right)  }%
L_{g}\left(  u\phi\right)  .
\]
Under the conformal deformation the mean curvature of the boundary transforms
according to the following formula%
\[
\frac{2\left(  n-1\right)  }{n-2}\frac{\partial\phi}{\partial\nu}%
+H\phi=\widetilde{H}\phi^{n/\left(  n-2\right)  }.
\]
We consider the following functional%
\begin{align*}
E_{g}\left(  u\right)   &  =\int_{M}c_{n}\left\vert \nabla u\right\vert
^{2}+Ru^{2}+2\int_{\partial M}Hu^{2}\\
&  =\int_{M}uL_{g}udv_{g}+\int_{\partial M}\left(  c_{n}\frac{\partial
u}{\partial\nu}+2Hu\right)  ud\sigma_{g}%
\end{align*}
This functional is conformally invariant: $E_{\widetilde{g}}\left(  u\right)
=E_{g}\left(  u\phi\right)  $. If $u$ is positive then%
\[
E_{g}\left(  u\right)  =\int_{M}R_{\widetilde{g}}dv_{\widetilde{g}}%
+\int_{\partial M}2H_{\widetilde{g}}d\sigma_{\widetilde{g}},
\]
where $\widetilde{g}=u^{4/\left(  n-2\right)  }g$.

\bigskip We define%
\[
\lambda\left(  M,g\right)  =\inf\frac{E_{g}\left(  u\right)  }{\int
_{M}\left\vert u\right\vert ^{2}}.
\]
The sign of $\lambda$ is conformally invariant. The Yamabe invariant is
defined to be%
\[
Y\left(  M,g\right)  =\inf\frac{E_{g}\left(  u\right)  }{\left(  \int
_{M}\left\vert u\right\vert ^{2n/\left(  n-2\right)  }\right)  ^{\left(
n-2\right)  /n}}.
\]
Aubin \cite{A} showed that $Y\left(  M,g\right)  \leq Y\left(  \mathbb{S}%
^{n}\right)  =n\left(  n-1\right)  \left(  \left\vert \mathbb{S}%
^{n}\right\vert \right)  ^{2/n}$ when $\partial M=\varnothing$ while Escobar
\cite{E1} and Cherrier \cite{C} proved that $Y\left(  M,g\right)  \leq
Y\left(  \mathbb{S}_{+}^{n}\right)  =n\left(  n-1\right)  \left(  \left\vert
\mathbb{S}^{n}\right\vert /2\right)  ^{2/n}$ when $\partial M\neq\varnothing$.

Let $\left(  M^{n},g\right)  $ be a compact Riemannian manifold with convex
boundary and $Ric\geq\left(  n-1\right)  $. From Theorem \ref{bvv} one can
derive the following

\begin{itemize}
\item (Sharp Sobolev inequalities) For $2<q\leq\left(  n+2\right)  /\left(
n-2\right)  $
\[
\left(  \frac{1}{V}\int_{M}\left\vert u\right\vert ^{q+1}\right)  ^{2/\left(
q+1\right)  }\leq\frac{q-1}{n}\frac{1}{V}\int_{M}\left\vert \nabla
u\right\vert ^{2}+\frac{1}{V}\int_{M}u^{2}.
\]

\item $Y\left(  M,g\right)  \geq n\left(  n-1\right)  V^{2/n}$. Moreover,
equality holds iff $g$ is Einstein with totally geodesic boundary.
\end{itemize}

This discussion also yields an analytic proof of the classic result that
$V\leq\left\vert \mathbb{S}^{n}\right\vert $ when $\partial M=\varnothing$ and
$V\leq\left\vert \mathbb{S}^{n}\right\vert /2$ when $\partial M\neq
\varnothing$.

Given a compact Riemannian problem $\left(  M^{n},g\right)  $ with nonempty
boundary, the type II Yamabe problem studied by Escobar \cite{E2} is whether
one can find a conformal metric $\widetilde{g}=\phi^{4/\left(  n-2\right)  }g$
with zero scalar curvature and constant mean curvature on the boundary. This
leads to the following equation%
\begin{align*}
L_{g}\phi &  =0\text{ on }M,\\
\frac{2\left(  n-1\right)  }{n-2}\frac{\partial\phi}{\partial\nu}+H\phi &
=c\phi^{n/\left(  n-2\right)  }\text{ on }\partial M.
\end{align*}
Assuming $\lambda\left(  M,g\right)  >0$ Escobar introduced the following
minimization%
\[
Q\left(  M,\partial M,g\right)  =\inf\frac{E_{g}\left(  u\right)  }{\left(
\int_{\partial M}\left\vert u\right\vert ^{2\left(  n-1\right)  /\left(
n-2\right)  }\right)  ^{\left(  n-2\right)  /\left(  n-1\right)  }}.
\]
Motivated by Theorem \ref{bvv} we propose to study \textbf{positive} solutions
of the following equation
\begin{equation}%
\begin{array}
[c]{ccc}%
\Delta u=0 & \text{on} & M,\\
\frac{\partial u}{\partial\nu}+\lambda u=u^{q} & \text{on} & \partial M,
\end{array}
\label{FE}%
\end{equation}
where $\lambda>0$ and $1<q\leq n/\left(  n-2\right)  $, and make the following conjecture.

\begin{conjecture}
\label{mainconj}Let $\left(  M^{n},g\right)  $ be a compact Riemannian
manifold with $Ric\geq0$ and $\Pi\geq1$ on $\partial M$. If $0<\lambda
\leq1/\left(  q-1\right)  $, then any positive solution $u$ of the above
equation must be constant unless $q=n/\left(  n-2\right)  $, $M$ is isometric
to $\overline{\mathbb{B}^{n}}\subset\mathbb{R}^{n}$ and $u$ corresponds to
\[
u_{a}\left(  x\right)  =\left[  \frac{2}{n-2}\frac{1-\left\vert a\right\vert
^{2}}{1+\left\vert a\right\vert ^{2}\left\vert x\right\vert ^{2}-2x\cdot
a}\right]  ^{\left(  n-2\right)  /2}%
\]
for some $a\in\mathbb{B}^{n}$.
\end{conjecture}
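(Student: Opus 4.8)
The natural strategy is to adapt the integral Bochner technique underlying Theorem \ref{bvv} to the boundary setting, the central tool being Reilly's formula. Since $u$ is harmonic, for any power $w=u^{\beta}$ one has $\Delta w=\beta(\beta-1)u^{\beta-2}|\nabla u|^{2}$, and Reilly's identity reads
\begin{equation*}
\int_{M}\left[ (\Delta w)^{2}-|\nabla^{2}w|^{2}-Ric(\nabla w,\nabla w)\right] dv_{g}=\int_{\partial M}\left[ 2w_{\nu}\Delta_{\Sigma}w+Hw_{\nu}^{2}+\Pi(\nabla_{\Sigma}w,\nabla_{\Sigma}w)\right] d\sigma_{g},
\end{equation*}
where $\nabla_{\Sigma}$ and $\Delta_{\Sigma}$ are the boundary gradient and Laplacian and $w_{\nu}=\partial w/\partial\nu$. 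The plan is to keep $\beta$ as a free parameter, insert the boundary condition $u_{\nu}=u^{q}-\lambda u$, and use $Ric\geq0$, $\Pi\geq1$, $H\geq n-1$ to collapse the identity into a sign-definite expression that can vanish only in the model case, exactly as the exponent is tuned in the proof of Theorem \ref{bvv}. The equality case one is aiming at is an Obata-type condition $\nabla^{2}w=\frac{\Delta w}{n}g$ together with a totally umbilic boundary.

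The first concrete step is to process the boundary integral. Integrating $2\int_{\partial M}w_{\nu}\Delta_{\Sigma}w$ by parts on the closed hypersurface $\Sigma$ and substituting $w_{\nu}=\beta(u^{\beta+q-1}-\lambda u^{\beta})$ and $\nabla_{\Sigma}w=\beta u^{\beta-1}\nabla_{\Sigma}u$ turns all tangential contributions into $u$-weighted multiples of $|\nabla_{\Sigma}u|^{2}$; convexity then gives $\Pi(\nabla_{\Sigma}w,\nabla_{\Sigma}w)\geq|\nabla_{\Sigma}w|^{2}$ and $Hw_{\nu}^{2}\geq(n-1)w_{\nu}^{2}$. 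On the interior side I would use $|\nabla^{2}w|^{2}\geq\frac{1}{n}(\Delta w)^{2}$, so that the interior integrand is at most $\frac{n-1}{n}(\Delta w)^{2}-Ric(\nabla w,\nabla w)$, and then \emph{reprocess} the surviving quartic term $\frac{n-1}{n}\int_{M}(\Delta w)^{2}$ by integrating by parts using $\Delta u=0$, which feeds it back onto the boundary. The exponent $\beta$ should be determined by forcing the powers of $u$ produced on $\Sigma$ from this quartic term to match those coming from $\int_{\partial M}w_{\nu}\Delta_{\Sigma}w$; this matching is where the constraints $1<q\leq n/(n-2)$ and $0<\lambda\leq1/(q-1)$ are expected to enter, and a useful auxiliary relation is obtained by multiplying the Robin condition by powers of $u$ and integrating.

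The main obstacle is precisely the indefinite tangential term $\int_{\Sigma}u^{q-1}|\nabla_{\Sigma}u|^{2}$ generated by $2\int_{\partial M}w_{\nu}\Delta_{\Sigma}w$: since $u$ is nonconstant on $\Sigma$ its sign cannot be controlled pointwise, so it must be balanced against the positive interior Hessian term by a global rather than local argument. A complementary route for the gradient control is the $P$-function method: $|\nabla u|^{2}$ is subharmonic because $\frac{1}{2}\Delta|\nabla u|^{2}=|\nabla^{2}u|^{2}+Ric(\nabla u,\nabla u)\geq0$, so it attains its maximum on $\Sigma$, where
\begin{equation*}
\tfrac{1}{2}\,\partial_{\nu}|\nabla u|^{2}=(qu^{q-1}-\lambda)|\nabla_{\Sigma}u|^{2}-\Pi(\nabla_{\Sigma}u,\nabla_{\Sigma}u)-u_{\nu}\left( \Delta_{\Sigma}u+Hu_{\nu}\right) .
\end{equation*}
The unsigned term $u_{\nu}\Delta_{\Sigma}u$ is the corresponding obstruction to a clean Hopf-lemma argument, and I expect one must add a correction $\phi(u)$ to $|\nabla u|^{2}$ and absorb $\Delta_{\Sigma}u$ through the equation. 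The genuinely hard case is the borderline exponent $q=n/(n-2)$, where the relevant Sobolev trace embedding is critical and the integral estimate degenerates exactly as the BVV argument does at $q=(n+2)/(n-2)$; this is where the extremal family must be allowed to appear.

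Finally, in the equality case I would extract rigidity as follows. Tightness forces $Ric(\nabla u,\nabla u)=0$, a pure-trace Hessian $\nabla^{2}w=\frac{\Delta w}{n}g$, and a totally umbilic boundary with $\Pi=g$ and $H=n-1$. The resulting overdetermined system, together with these boundary data, should identify $(M,g)$ conformally with the flat ball: the factor $u^{4/(n-2)}$ produces a scalar-flat metric with constant boundary mean curvature, and under $Ric\geq0$ and $\Pi\geq1$ the only possibility is $\overline{\mathbb{B}^{n}}$ with $u$ one of the Möbius factors $u_{a}$. Making this classification rigorous, in tandem with closing the critical integral estimate, is where I expect the real difficulty to concentrate, consistent with the fact that the statement is offered here as a conjecture.
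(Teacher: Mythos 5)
There is no proof to compare against: the statement you are addressing is Conjecture \ref{mainconj}, which the paper explicitly declares ``completely open'' (progress came only later, in the papers cited in the footnote). Your proposal is therefore correctly read as a strategy sketch rather than a proof, and to your credit you say as much in your final sentence. The Reilly-formula attack you outline is indeed the natural one --- it is exactly the mechanism the paper uses for the \emph{linear} analogues in Section 4 (Propositions \ref{Reilly} and \ref{lam+}, and the derivation of Conjecture \ref{bsnn} from Conjecture \ref{mainconj}) --- but as written it does not close. The three points where it fails are the ones you flag yourself: (i) the exponent $\beta$ in $w=u^{\beta}$ is never determined, and no choice is exhibited for which the boundary terms produced by $2\int_{\partial M}w_{\nu}\Delta_{\Sigma}w$ and by the reprocessed $\frac{n-1}{n}\int_{M}(\Delta w)^{2}$ actually combine with a definite sign; (ii) the indefinite tangential term $\int_{\Sigma}u^{q-1}|\nabla_{\Sigma}u|^{2}$ is acknowledged but not absorbed, and neither the integral balancing nor the corrected $P$-function $|\nabla u|^{2}+\phi(u)$ is carried out far enough to see that the hypotheses $\Pi\geq1$ and $0<\lambda\leq1/(q-1)$ suffice; (iii) the equality-case classification at $q=n/(n-2)$ is asserted, not derived. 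A further caution: the BVV/Ilias argument you are transplanting lives entirely in the interior and exploits the refined Kato-type inequality $|\nabla^{2}w|^{2}\geq\frac{1}{n}(\Delta w)^{2}$ against an interior nonlinearity; here the nonlinearity sits on $\Sigma$, so the analogous tuning is a genuinely different (and, at the time of this paper, unsolved) problem, not a routine adaptation. In short: a sensible research plan, but every step that would make it a proof is missing, consistent with the statement's status as a conjecture.
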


\bigskip At the moment this conjecture is completely open. But in dimension 2
an analogous problem was studied by the author \cite{W2} in which the
following result was proved.

\begin{theorem}
Let $(\Sigma,g)$ be a compact surface with Gaussian curvature $K\geq0$ and on
the boundary the geodesic curvature $\kappa\geq1$. Consider the following
equation%
\[%
\begin{array}
[c]{ccc}%
\Delta u=0 & \text{on} & \Sigma,\\
\frac{\partial u}{\partial\nu}+\lambda=e^{u} & \text{on} & \partial\Sigma,
\end{array}
\]
where $\lambda$ is a positive constant. If $\lambda<1$ then $u$ is constant;
if $\lambda=1$ and $u$ is not constant, then $\Sigma$ is isometric to the unit
disc $\overline{\mathbb{B}^{2}}$ and $u$ is given by
\[
u\left(  z\right)  =\log\frac{1-\left\vert a\right\vert ^{2}}{1+\left\vert
a\right\vert ^{2}\left\vert z\right\vert ^{2}-2\operatorname{Re}\left(
z\overline{a}\right)  },
\]
for some $a\in\mathbb{B}^{2}$.
\end{theorem}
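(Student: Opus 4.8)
The plan is to reduce both statements to a single Bochner--Reilly integral identity for $u$ and then read off the two conclusions from it. Throughout, write $u_\nu=\partial u/\partial\nu$ and let $u_T$ denote the tangential derivative of $u$ along $\partial\Sigma$, so that $|\nabla u|^2=u_\nu^2+u_T^2$ on the boundary.

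I begin with reductions. The constant solutions are exactly $u\equiv\log\lambda$, so the task is to rule out a non-constant solution when $\lambda<1$ and to identify it when $\lambda=1$. Integrating the boundary condition and using $\int_\Sigma\Delta u\,dA=0$ gives $\int_{\partial\Sigma}e^u\,ds=\lambda L$, where $L=|\partial\Sigma|$; combined with $K\ge0$, $\kappa\ge1$ and Gauss--Bonnet this already forces $\chi(\Sigma)=1$, i.e.\ $\Sigma$ is a topological disc. It is also illuminating to record the conformal picture: for $\hat g=e^{2u}g$ one has $\hat K=e^{-2u}K\ge0$ (since $u$ is harmonic) and $\hat\kappa=e^{-u}(\kappa-\lambda)+1$, so $\lambda\le1$ and $\kappa\ge1$ give $\hat\kappa\ge1$. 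Thus $(\Sigma,\hat g)$ is a disc with nonnegative curvature and boundary geodesic curvature at least $1$, and the rigidity statement is precisely that $\hat g$ is the flat unit disc, with $\hat\kappa\equiv1$ forcing $\kappa\equiv1$.

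The engine is the Bochner formula $\tfrac12\Delta|\nabla u|^2=|\nabla^2u|^2+K|\nabla u|^2$ (using $\mathrm{Ric}=Kg$ in dimension two and $\Delta u=0$). Integrating over $\Sigma$ turns the left-hand side into $\int_{\partial\Sigma}\nabla^2u(\nu,\nabla u)\,ds$. Decomposing $\nabla u=u_\nu\nu+u_TT$, using $\nabla^2u(\nu,\nu)=-\nabla^2u(T,T)$ and $\nabla_TT=-\kappa\nu$, differentiating the boundary condition to get $\partial_Tu_\nu=e^uu_T$, and integrating by parts once along the closed curve $\partial\Sigma$, I expect to arrive at the identity
\[
\int_\Sigma\bigl(|\nabla^2u|^2+K|\nabla u|^2\bigr)\,dA+\int_{\partial\Sigma}\kappa\bigl(u_\nu^2+u_T^2\bigr)\,ds=2\int_{\partial\Sigma}e^u\,u_T^2\,ds,
\]
in which every term on the left is nonnegative. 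Using $\kappa\ge1$, $K\ge0$ and $e^u=u_\nu+\lambda$, this yields the boundary inequality $\int_{\partial\Sigma}(u_\nu^2+u_T^2)\,ds\le 2\int_{\partial\Sigma}(u_\nu+\lambda)u_T^2\,ds$.

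The remaining, and genuinely hard, point is to feed the sign of $\lambda-1$ into this inequality by comparing the tangential and normal boundary derivatives of the harmonic function $u$ — equivalently, controlling the Dirichlet-to-Neumann (Steklov) operator of $(\Sigma,g)$, which on the round disc is \emph{balanced} ($\int u_\nu^2=\int u_T^2$) and whose first nonzero eigenvalue is $\ge1$ under $K\ge0,\ \kappa\ge1$ by Escobar's sharp Steklov estimate. \textbf{This comparison is the crux and the main obstacle}: a linearization of the model solutions about a constant shows that the $O(a^2)$ contribution to the boundary term cancels, so the lossy step $\kappa\ge1$ alone cannot close the estimate; one must combine the Steklov balance with the constraint $\int_{\partial\Sigma}e^u=\lambda L$ to conclude that $\lambda<1$ forces $u_T\equiv0$, hence $u$ constant. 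For the equality case $\lambda=1$ with $u$ non-constant, equality must hold throughout: $K\equiv0$ on $\{\nabla u\neq0\}$ (hence everywhere, as the zeros of $\nabla u$ are isolated) and $\kappa\equiv1$, so $(\Sigma,g)$ is flat with boundary geodesic curvature one, i.e.\ isometric to $\overline{\mathbb B^2}$. To identify $u$, pass to the harmonic conjugate $v$ and set $G=e^{-(u+iv)/2}$, a holomorphic function with $|G|^2=e^{-u}$; the boundary relation $\partial_Tv=e^u-1$ together with holomorphicity forces $G$ to be affine in the disc coordinate, and the resulting conformal-automorphism orbit of the constant solution is exactly the family $u_a$.
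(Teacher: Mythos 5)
The paper does not actually prove this theorem: it is quoted from reference \cite{W2} ("Uniqueness results on surfaces with boundary"), so there is no in-paper proof to compare against, and your attempt has to stand on its own. Your preliminary observations are correct: the constant solutions are $u\equiv\log\lambda$; Gauss--Bonnet forces $\Sigma$ to be a disc; the conformal bookkeeping $\hat K=e^{-2u}K$ and $\hat\kappa=e^{-u}(\kappa-\lambda)+1\ge 1$ is right; and the integral identity you write down is exactly the two-dimensional Reilly formula for the harmonic extension of $f=u|_{\partial\Sigma}$, after integrating $2u_\nu\Delta_{\partial\Sigma}f$ by parts and using $\partial_T u_\nu=e^u u_T$. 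All of that is in the spirit of the Reilly-type arguments the paper uses elsewhere (Propositions \ref{Reilly} and \ref{lam+}).

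However, the proof is not complete, and you say so yourself. The entire content of the theorem is concentrated in the step you label ``the crux and the main obstacle'': deducing from
\[
\int_{\partial\Sigma}\bigl(u_\nu^2+u_T^2\bigr)\,ds\le 2\int_{\partial\Sigma}(u_\nu+\lambda)\,u_T^2\,ds
\]
that $u_T\equiv0$ when $\lambda<1$. You correctly diagnose that the crude bounds $K\ge0$, $\kappa\ge1$ cannot close this (the second-order terms cancel exactly at the model solutions $u_a$ on the disc, so the inequality is saturated to quadratic order and any successful argument must exploit the cubic structure of $\int e^u u_T^2$ or additional global information). But the proposed remedy --- ``combine the Steklov balance with the constraint $\int_{\partial\Sigma}e^u=\lambda L$'' --- is a plan, not an argument: the balance $\int u_\nu^2=\int u_T^2$ holds only on the round disc, Escobar's Steklov bound controls $\int u_\nu u$ rather than the quadratic combination you need, and no mechanism is given by which the strict inequality $\lambda<1$ enters. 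Since the equality analysis ($K\equiv0$, $\kappa\equiv1$, and the harmonic-conjugate classification of $u$) presupposes that the inequality chain has been closed with identified equality cases, the $\lambda=1$ statement is also unproven. As it stands this is a correct reduction to a genuinely open estimate, not a proof.
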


Next we discuss a geometric implication of Conjecture \ref{mainconj}. For
$1<q<n/\left(  n-2\right)  $ the following minimization problem%
\[
\inf\frac{\left(  q-1\right)  \int_{M}\left\vert \nabla u\right\vert ^{2}%
+\int_{\partial M}u^{2}}{\left(  \int_{\partial M}\left\vert u\right\vert
^{q+1}\right)  ^{2/\left(  q+1\right)  }}%
\]
is achieved by smooth positive function satisfying (\ref{FE}) with
$\lambda=1/(q-1)$. If the conjecture is true then the minimizer is constant
and therefore the following inequality holds%

\begin{equation}
\left\vert \partial M\right\vert ^{\left(  q-1\right)  /\left(  q+1\right)
}\left(  \int_{\partial M}\left\vert u\right\vert ^{q+1}\right)  ^{2/\left(
q+1\right)  }\leq\left(  q-1\right)  \int_{M}\left\vert \nabla u\right\vert
^{2}+\int_{\partial M}u^{2}. \label{ineq_t}%
\end{equation}
Letting $q\nearrow n/\left(  n-2\right)  $ yields%
\[
\left\vert \partial M\right\vert ^{1/\left(  n-1\right)  }\left(
\int_{\partial M}\left\vert u\right\vert ^{2\left(  n-1\right)  /\left(
n-2\right)  }\right)  ^{\left(  n-2\right)  /\left(  n-1\right)  }\leq\int
_{M}\frac{2}{n-2}\left\vert \nabla u\right\vert ^{2}+\int_{\partial M}u^{2}.
\]
Then
\begin{align*}
E_{g}\left(  u\right)   &  =\int_{M}\frac{4\left(  n-1\right)  }%
{n-2}\left\vert \nabla u\right\vert ^{2}+Ru^{2}+2\int_{\partial M}Hu^{2}\\
&  \geq\int_{M}\frac{4\left(  n-1\right)  }{n-2}\left\vert \nabla u\right\vert
^{2}+2\left(  n-1\right)  \int_{\partial M}u^{2}\\
&  \geq2\left(  n-1\right)  \left\vert \partial M\right\vert ^{1/\left(
n-1\right)  }\left(  \int_{\partial M}\left\vert u\right\vert ^{2\left(
n-1\right)  /\left(  n-2\right)  }\right)  ^{\left(  n-2\right)  /\left(
n-1\right)  }.
\end{align*}
Therefore
\[
Q\left(  M,\partial M,g\right)  \geq2\left(  n-1\right)  \left\vert \partial
M\right\vert ^{1/\left(  n-1\right)  }.
\]
As $Q\left(  M,\partial M,g\right)  \leq Q\left(  \mathbb{B}^{n}%
,\partial\mathbb{B}^{n}\right)  =2\left(  n-1\right)  \left\vert
\mathbb{S}^{n-1}\right\vert ^{1/\left(  n-1\right)  }$ we obtain%
\[
\left\vert \partial M\right\vert \leq\left\vert \mathbb{S}^{n-1}\right\vert .
\]
In summary Conjecture \ref{mainconj} implies the following conjecture.

\begin{conjecture}
\label{bsnn}Let $\left(  M^{n},g\right)  $ be a compact Riemannian manifold
with $Ric\geq0$ and $\Pi\geq1$ on $\partial M$. Then
\[
\left\vert \partial M\right\vert \leq\left\vert \mathbb{S}^{n-1}\right\vert .
\]

\end{conjecture}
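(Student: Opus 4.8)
Since the text already shows that Conjecture \ref{mainconj} implies the present statement, and more precisely that it follows from the sharp trace Sobolev inequality \eqref{ineq_t} with $\lambda=1/(q-1)$, I would not attack the area bound directly. Purely metric comparison cannot suffice: under $Ric\geq 0$ and $\Pi\geq 1$ the Riccati/Laplacian comparison for the distance function to $\partial M$ only shows that the parallel hypersurface at distance $t$ has area at most $(1-t)^{n-1}|\partial M|$, i.e. that $\partial M$ is the \emph{largest} slice, which bounds the volume in terms of the area rather than the area from above. The plan is therefore to prove Conjecture \ref{mainconj} itself (at least in the subcritical range $1<q<n/(n-2)$, which already yields \eqref{ineq_t} and hence the area bound after letting $q\nearrow n/(n-2)$), following the integral method of Bidaut-Veron and Veron and of Ilias that underlies Theorem \ref{bvv}, but transplanted from the interior nonlinearity to the boundary. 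Note that linearizing the boundary condition at a constant recovers the Steklov problem, so Conjecture \ref{mainconj} is the nonlinear analogue of the sharp eigenvalue estimate $\sigma_{1}\geq 1$ under these hypotheses.

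Concretely, I would start from the Bochner formula for the harmonic function $u$. Since $\Delta u=0$ and $Ric\geq 0$, one has $\frac12\Delta|\nabla u|^{2}=|\nabla^{2}u|^{2}+Ric(\nabla u,\nabla u)\geq 0$. Integrating over $M$ and decomposing $\nabla u$ into tangential and normal parts on $\partial M$, using the structure equations $\nabla^{2}u(\nu,X)=X(\partial_{\nu}u)-\Pi(\nabla_{\Sigma}u,X)$ and $\nabla^{2}u(\nu,\nu)=-\Delta_{\Sigma}u-H\,\partial_{\nu}u$ (the latter from $\Delta u=0$), and finally substituting the Neumann condition $\partial_{\nu}u=u^{q}-\lambda u$, converts the boundary term into an integral over $\partial M$ of curvature quantities. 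The first main step is to choose the correct weighting: rather than $|\nabla u|^{2}$ I would run the argument with $|\nabla u^{\alpha}|^{2}$ (equivalently, weight the Bochner integrand by a power of $u$) and fix $\alpha=\alpha(n,q)$ so that the refined inequality $|\nabla^{2}u|^{2}\geq(\Delta u)^{2}/n$ and the Ricci term combine, exactly as in the proof of Theorem \ref{bvv}, into a manifestly nonnegative interior integrand for $1<q\leq n/(n-2)$.

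The second step is the boundary analysis, where $\Pi\geq 1$ and the threshold $\lambda\leq 1/(q-1)$ must be used. After the weighting and an integration by parts on the closed manifold $\partial M$, the boundary integrand takes the schematic form $c_{1}(q,\lambda,u)|\nabla_{\Sigma}u|^{2}-\Pi(\nabla_{\Sigma}u,\nabla_{\Sigma}u)-c_{2}(u)(\partial_{\nu}u)^{2}$, and the goal is to show it is pointwise $\leq 0$, so that the sign of the integrated identity forces $\nabla^{2}u\equiv 0$ in the bulk and equality everywhere on the boundary. Bounding $\Pi(\nabla_{\Sigma}u,\nabla_{\Sigma}u)\geq|\nabla_{\Sigma}u|^{2}$ disposes of the tangential term provided $c_{1}\leq 1$, which is precisely where $\lambda\leq 1/(q-1)$ should enter and where the critical exponent $q=n/(n-2)$ leaves no slack. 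Once $\nabla^{2}u\equiv 0$, the vector field $\nabla u$ is parallel; combined with $Ric(\nabla u,\nabla u)=0$, $\Pi\equiv 1$, and the equation, I would then run an Obata-type argument to identify $(M,g)$ with $\overline{\mathbb{B}^{n}}$ and $u$ with $u_{a}$, the non-constant case arising only at $q=n/(n-2)$.

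The hard part will be exactly this second step: the positive tangential coefficient $c_{1}$ produced by differentiating the nonlinearity $u^{q}$ grows with $u$, and it is far from obvious that the correct weight $\alpha$ makes $c_{1}\leq 1$ uniformly using only $\Pi\geq 1$ and $\lambda\leq 1/(q-1)$. Unlike the interior problem of Theorem \ref{bvv}, here there is no interior maximum principle to fall back on because the nonlinearity lives entirely on $\partial M$, and at the critical exponent the estimate is borderline with no room to spare. This balancing of the boundary terms is, I expect, the essential reason the conjecture remains open.
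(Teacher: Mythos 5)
Your proposal is not a proof, and you essentially say so yourself: the statement is Conjecture \ref{bsnn}, which is open, and what you have written is the reduction to Conjecture \ref{mainconj} followed by a plan of attack whose decisive step you concede you cannot carry out (producing the weight $\alpha$ for which the boundary integrand has a sign using only $\Pi\geq1$ and $\lambda\leq 1/(q-1)$). As an assessment of the problem your write-up is accurate: the chain consisting of the subcritical trace inequality \eqref{ineq_t}, the limit $q\nearrow n/(n-2)$, the lower bound $Q(M,\partial M,g)\geq 2(n-1)\left\vert \partial M\right\vert ^{1/(n-1)}$, and Escobar's upper bound for $Q$ is exactly the implication the paper records before stating the conjecture; your observation that the Riccati comparison under $Ric\geq0$, $\Pi\geq1$ only bounds $V(M)$ in terms of $\left\vert \partial M\right\vert$ rather than bounding the area from above is correct and explains why naive metric comparison fails; and your boundary structure equations and the Steklov linearization are sound. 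But the weighted Bochner--Reilly computation you outline is precisely the known obstruction, and without verifying the sign of the boundary term you have established nothing beyond what the paper already asserts conditionally.

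What your proposal misses is that the paper proves the conjecture unconditionally in two special cases by routes entirely different from yours, and these are the actual content on offer. In dimension $3$ (Theorem \ref{nn3}) the argument is: Reilly's formula with $Ric\geq0$ and $\Pi\geq g|_{\Sigma}$ gives $\lambda_{1}(\Sigma)\geq2$ (Proposition \ref{Reilly}); the Bochner vanishing theorem for harmonic $1$-forms with absolute boundary conditions gives $H^{1}(M)=0$, so $\Sigma$ is a topological sphere (Corollary \ref{top3}); and Hersch's inequality $A(\Sigma)\leq 8\pi/\lambda_{1}(\Sigma)$ then yields $A(\Sigma)\leq4\pi$ with rigidity, the volume bound following from Ros's inequality (Theorem \ref{Ros}). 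Under the stronger hypothesis $\sec\geq0$ and $\Pi\geq1$, the Gauss equation gives $\sec_{\Sigma}\geq1$ and Bishop--Gromov applied to the boundary gives $\left\vert \Sigma\right\vert \leq\left\vert \mathbb{S}^{n-1}\right\vert$ directly. Both arguments bound the area by controlling the \emph{intrinsic} geometry of $\Sigma$ rather than by solving a boundary-value problem in the bulk; if you want theorems rather than a program, that is where the traction currently is.
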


We remark that the inequality (\ref{ineq_t}) for $1\leq q\leq n/\left(
n-2\right)  $ on a compact Riemannian manifold $\left(  M^{n},g\right)  $ with
$Ric\geq0$ and $\Pi\geq1$ on $\partial M$ that would follow from Conjecture
\ref{mainconj} is known to be true on $\overline{\mathbb{B}^{n}}$. This was
proved by Beckner \cite{B} as a corollary of the Hardy-Littlewood-Sobolev
inequality with sharp constant on the sphere. Here is the precise statement

\begin{theorem}
(Beckner \cite{B}) For $1\leq q\leq n/\left(  n-2\right)  $
\[
c_{n}{}^{\left(  q-1\right)  /\left(  q+1\right)  }\left(  \int_{\mathbb{S}%
^{n-1}}\left\vert F(\xi)\right\vert ^{q+1}d\xi\right)  ^{2/\left(  q+1\right)
}\leq\left(  q-1\right)  \int_{\mathbb{B}^{n}}\left\vert \nabla
u(x)\right\vert ^{2}dx+\int_{\mathbb{S}^{n}}\left\vert F(\xi)\right\vert
^{2}d\xi,
\]
where $u$ is the harmonic extension of $F$ and $c_{n}=2\pi^{n/2}/\Gamma\left(
n/2\right)  =\left\vert \mathbb{S}^{n-1}\right\vert $.
\end{theorem}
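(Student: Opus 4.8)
The plan is to transplant everything to the boundary sphere $\mathbb{S}^{n-1}$, convert the Dirichlet energy of the harmonic extension into a spectral quadratic form, and recognize the result as dual to a sharp Hardy--Littlewood--Sobolev (HLS) inequality on the sphere. First I would expand the boundary datum in spherical harmonics, $F=\sum_{k\ge 0}Y_k$ with $\deg Y_k=k$; its harmonic extension is $u=\sum_k r^kY_k$, and integration by parts gives $\int_{\mathbb{B}^n}|\nabla u|^2=\int_{\mathbb{S}^{n-1}}F\,\partial_\nu u=\langle F,\mathcal D F\rangle$, where $\mathcal D$ is the Dirichlet-to-Neumann operator, diagonal with eigenvalue $k$ on $Y_k$. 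Hence the right-hand side equals $\langle F,A_qF\rangle$ with $A_q=(q-1)\mathcal D+\mathrm{Id}$ (eigenvalue $(q-1)k+1$ on $Y_k$), and the claim reduces to the purely spherical inequality
\[
c_n^{(q-1)/(q+1)}\,\|F\|_{L^{q+1}(\mathbb{S}^{n-1})}^2\le\langle F,A_qF\rangle .
\]

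Since $A_q$ is positive and diagonal, I would pass to the dual statement, which carries the same constant: substituting $g=A_q^{1/2}F$ and using the self-adjointness of $A_q^{-1/2}$ together with $L^p$--$L^{p'}$ duality of operator norms, the displayed inequality is equivalent to
\[
\langle g,A_q^{-1}g\rangle\le c_n^{-(q-1)/(q+1)}\,\|g\|_{L^{(q+1)'}(\mathbb{S}^{n-1})}^2,\qquad (q+1)'=\tfrac{q+1}{q}.
\]
The point of this reformulation is that $A_q^{-1}$ has a positive, symmetric kernel depending only on $\xi\cdot\eta$. Writing $\tfrac{1}{(q-1)k+1}=\int_0^\infty e^{-t}e^{-(q-1)tk}\,dt$ and recognizing $e^{-s\mathcal D}$ as the Poisson operator $P_{e^{-s}}$, one obtains $A_q^{-1}=\int_0^\infty e^{-t}P_{\rho(t)}\,dt$ with $\rho(t)=e^{-(q-1)t}$, a superposition of positive Poisson kernels; in particular $A_q^{-1}(\xi,\eta)$ is a decreasing function of the distance with leading singularity $|\xi-\eta|^{-(n-2)}$.

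Next I would separate the two regimes. Observe that $\mathcal D+\tfrac{n-2}{2}$ is the conformally covariant boundary operator of order one on $\mathbb{S}^{n-1}$, its eigenvalue $k+\tfrac{n-2}{2}=\Gamma(\tfrac n2+k)/\Gamma(\tfrac{n-2}{2}+k)$ being the half-Laplacian intertwinor $A_{1/2}$. At the critical exponent $q=n/(n-2)$ one has $A_q=\tfrac{2}{n-2}A_{1/2}$, and $A_{1/2}^{-1}$ is exactly the Riesz potential, i.e.\ convolution with $|\xi-\eta|^{-(n-2)}$; here $(q+1)'=2(n-1)/n$, so the dual inequality is precisely the sharp HLS inequality on $\mathbb{S}^{n-1}$ with the Riesz kernel. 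For $1<q<n/(n-2)$ the kernel has the same singularity $|\xi-\eta|^{-(n-2)}$ but the exponent $(q+1)'=(q+1)/q>2(n-1)/n$ is strictly subcritical, so the sharp constant is attained by constants alone; a direct evaluation at $g\equiv 1$ (using that the $k=0$ eigenvalue of $A_q^{-1}$ is $1$) gives the value $c_n^{-(q-1)/(q+1)}$, matching the claimed constant.

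The main obstacle is establishing the sharp constant together with its rigidity. Each case is an instance of sharp HLS on $\mathbb{S}^{n-1}$, and identifying the extremal functions is the real content; the available tools are Lieb's rearrangement and competing-symmetries argument, or stereographic transport to $\mathbb{R}^n$ combined with the known sharp HLS there. The delicate feature is the critical exponent $q=n/(n-2)$, where the conformal group of $\overline{\mathbb{B}^n}$ acts noncompactly: there the constant ceases to be the unique extremizer and its conformal orbit produces the one-parameter family $u_a$, $a\in\mathbb{B}^n$, of Conjecture \ref{mainconj}. Controlling this symmetry breaking --- only constants are extremal for $q<n/(n-2)$, while at $q=n/(n-2)$ the extremizers are exactly the boundary traces of the conformal factors $u_a$ --- is precisely where the conformal covariance of $\mathcal D+\tfrac{n-2}{2}$ must be used, and it is the step that does not reduce to routine computation.
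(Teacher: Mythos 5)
The paper does not actually prove this statement: it is quoted from Beckner's paper \cite{B}, with the one-line indication that it is ``a corollary of the Hardy--Littlewood--Sobolev inequality with sharp constant on the sphere.'' Your outline is consistent with that route, and its first half is correct: the identity $\int_{\mathbb{B}^{n}}\left\vert \nabla u\right\vert ^{2}=\langle F,\mathcal{D}F\rangle$ for the Dirichlet-to-Neumann map $\mathcal{D}$ (eigenvalue $k$ on degree-$k$ spherical harmonics), the reduction to $c_{n}^{\left(  q-1\right)  /\left(  q+1\right)  }\Vert F\Vert_{q+1}^{2}\leq\langle F,\left(  \left(  q-1\right)  \mathcal{D}+I\right)  F\rangle$, the quadratic-form duality, the subordination formula $A_{q}^{-1}=\int_{0}^{\infty}e^{-t}P_{e^{-\left(  q-1\right)  t}}\,dt$, and the identification of the critical case $q=n/\left(  n-2\right)  $ with the sharp HLS inequality for the Riesz kernel $\left\vert \xi-\eta\right\vert ^{-\left(  n-2\right)  }$ at the exponent $2\left(  n-1\right)  /n$ are all right, and at the critical exponent one may simply invoke Lieb's sharp HLS theorem.

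The genuine gap is the subcritical range $1<q<n/\left(  n-2\right)  $. You assert that because $\left(  q+1\right)  ^{\prime}>2\left(  n-1\right)  /n$ is subcritical and the kernel of $A_{q}^{-1}$ is positive and radially decreasing, ``the sharp constant is attained by constants alone.'' That is not a theorem you can invoke: $A_{q}^{-1}$ is not a power kernel, so this is not an instance of sharp HLS, and for general positive decreasing kernels at subcritical exponents rearrangement only yields an extremizer that is symmetric decreasing about some pole --- constancy of the extremizer is exactly what must be proved, and it can fail for such functionals (symmetry breaking). Beckner's actual argument is different: one applies the sharp HLS/fractional-intertwinor inequality at order $2s$ with $2s=\left(  n-1\right)  \frac{q-1}{q+1}$ chosen so that $\frac{2\left(  n-1\right)  }{n-1-2s}=q+1$, which bounds $\Vert F\Vert_{q+1}^{2}$ by the quadratic form of the operator with eigenvalues $\frac{\Gamma\left(  \frac{n-1}{2}-s\right)  \Gamma\left(  k+\frac{n-1}{2}+s\right)  }{\Gamma\left(  \frac{n-1}{2}+s\right)  \Gamma\left(  k+\frac{n-1}{2}-s\right)  }$, and then one proves the Gamma-ratio comparison
\[
\frac{\Gamma\left(  \tfrac{n-1}{2}-s\right)  \Gamma\left(  k+\tfrac{n-1}{2}+s\right)  }{\Gamma\left(  \tfrac{n-1}{2}+s\right)  \Gamma\left(  k+\tfrac{n-1}{2}-s\right)  }\leq1+\left(  q-1\right)  k\qquad\text{for all }k\geq0,\ 0<s\leq\tfrac{1}{2},
\]
which reduces the subcritical inequality to the critical machinery. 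This eigenvalue lemma (true, with equality only at $s=1/2$, i.e. $q=n/\left(  n-2\right)  $) is the real content of the subcritical case and is entirely absent from your proposal. A secondary point: your closing paragraph treats the classification of extremizers as ``the main obstacle,'' but the stated theorem is only the inequality; the extremizer discussion belongs to Conjecture \ref{mainconj}, not to this result.
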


\section{Boundary effect on topology}

In this section we prove some topological results on compact Riemannian
manifolds with a lower bound for Ricci curvature and a corresponding lower
bound for the 2nd fundamental form on the boundary. Besides their independent
interest, these topological results will be used to prove some geometric
results in the next section.

\begin{proposition}
Let $\left(  M^{n},g\right)  $ be a compact Riemannian manifold with boundary
$\Sigma$. Suppose $Ric\geq0$.

\begin{itemize}
\item If the boundary has positive mean curvature, then $H^{1}\left(
M,\Sigma\right)  =0$.

\item If the boundary is strictly convex, then $H^{1}\left(  M\right)  =0$.
\end{itemize}
\end{proposition}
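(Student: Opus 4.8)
The plan is to prove both statements by the Bochner technique applied to harmonic $1$-forms, using Hodge theory for compact manifolds with boundary to furnish the cohomological representatives. Recall that by the Hodge--Morrey--Friedrichs decomposition the relative group $H^{1}\left( M,\Sigma\right) $ is represented by harmonic fields $\omega$ (that is, $d\omega=0$ and $\delta\omega=0$) satisfying the relative (Dirichlet) boundary condition $\iota^{\ast}\omega=0$, so that $\omega$ is purely normal along $\Sigma$; while $H^{1}\left( M\right) $ is represented by harmonic fields satisfying the absolute (Neumann) condition $\iota_{\nu}\omega=0$, so that $\omega$ is purely tangential along $\Sigma$. In each case it suffices to show that the only such harmonic field is $\omega=0$.

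First I would write the Weitzenb\"{o}ck formula $\Delta_{H}\omega=\nabla^{\ast}\nabla\omega+\mathrm{Ric}\left( \omega\right) $ for a harmonic field $\omega$, pair it with $\omega$, and integrate over $M$. Integration by parts of the rough Laplacian yields
\[
\int_{\partial M}\left\langle \nabla_{\nu}\omega,\omega\right\rangle =\int_{M}\left\vert \nabla\omega\right\vert ^{2}+\int_{M}\mathrm{Ric}\left( \omega,\omega\right),
\]
so everything reduces to computing the boundary integrand $\left\langle \nabla_{\nu}\omega,\omega\right\rangle $ in terms of $\Pi$ and $H$. Writing $X=\omega^{\sharp}$ and using a tangent frame $e_{1},\dots,e_{n-1}$ of $\Sigma$ together with $\nu$, the conditions $d\omega=0$ (equivalently, $\nabla X$ symmetric) and $\delta\omega=0$ (equivalently, $\operatorname{div}X=0$) let me trade normal derivatives for tangential ones and bring in the second fundamental form. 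In the absolute case $X$ is tangential, and $d\omega=0$ gives $\left\langle \nabla_{\nu}X,e_{i}\right\rangle =\left\langle \nabla_{e_{i}}X,\nu\right\rangle =-\Pi\left( X,e_{i}\right) $, whence $\left\langle \nabla_{\nu}\omega,\omega\right\rangle =-\Pi\left( X,X\right) $. In the relative case $X=f\nu$ along $\Sigma$ with $f=\omega\left( \nu\right) $, and $\delta\omega=0$ gives $\left\langle \nabla_{\nu}X,\nu\right\rangle =-\sum_{i}\left\langle \nabla_{e_{i}}X,e_{i}\right\rangle =-Hf$, whence $\left\langle \nabla_{\nu}\omega,\omega\right\rangle =-Hf^{2}$.

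Substituting these into the integrated identity closes both cases by a sign argument. Since $\mathrm{Ric}\geq0$, the right-hand side is nonnegative; in the strictly convex case $\Pi>0$ forces $-\int_{\partial M}\Pi\left( X,X\right) \leq0$, and in the positive mean curvature case $H>0$ forces $-\int_{\partial M}Hf^{2}\leq0$. Hence both sides vanish: $\omega$ is parallel, so $\left\vert \omega\right\vert $ is constant, and in addition $X=0$ on $\Sigma$ (from $\Pi>0$) respectively $f=0$ on $\Sigma$ (from $H>0$), so $\omega$ vanishes at some point of the nonempty boundary. A parallel form with a zero is identically zero, giving $\omega\equiv0$ and the desired vanishing of the respective cohomology group.

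I expect the one delicate point to be the boundary computation: correctly pairing each cohomology group with its Hodge-theoretic boundary condition and using $d\omega=0$ and $\delta\omega=0$ \emph{simultaneously} to express $\left\langle \nabla_{\nu}\omega,\omega\right\rangle $ through $\Pi$ and $H$ with the right signs. Once that identity is in hand, the conclusion is a clean dichotomy driven entirely by the curvature and convexity hypotheses.
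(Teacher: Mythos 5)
Your proposal is correct and follows essentially the same route as the paper: Hodge theory to represent $H^{1}\left(M,\Sigma\right)$ and $H^{1}\left(M\right)$ by harmonic fields with the relative and absolute boundary conditions, the integrated Bochner--Weitzenb\"ock identity, and the two boundary computations giving $\left\langle \nabla_{\nu}\omega,\omega\right\rangle =-H\,\omega\left(\nu\right)^{2}$ and $-\Pi\left(X,X\right)$ respectively, with the same signs. The only (cosmetic) difference is that you phrase the boundary calculation via the dual vector field $X=\omega^{\sharp}$ rather than the form itself.
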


This result should be well known. A proof using minimal surfaces for the 2nd
part was given by Fraser and Li \cite{FL}. We explain the standard argument
with harmonic forms. By the Hodge theory for compact Riemannian manifolds with
boundary
\[
H^{1}\left(  M,\Sigma\right)  \cong\mathcal{H}_{R}^{1}\left(  M\right)  ,
\]
where $\mathcal{H}_{R}^{1}\left(  M\right)  $ is the space of harmonic
$1$-forms satisfying the relative boundary condition, i.e. $\alpha
\in\mathcal{H}_{R}^{1}\left(  M\right)  $ iff $d\alpha=0,d^{\ast}\alpha=0$ and
$\alpha\wedge\nu^{\ast}=0$ on the boundary. Note that the boundary condition
simply means $\alpha\left(  e_{i}\right)  =0,i=1,\cdots,n-1$. Thus
$\left\langle \nabla_{\nu}\alpha,\alpha\right\rangle =\alpha\left(
\nu\right)  \nabla_{\nu}\alpha\left(  \nu\right)  $ on the boundary. We
compute
\begin{align*}
\nabla_{\nu}\alpha\left(  \nu\right)   &  =-\sum_{i=1}^{n-1}\nabla_{e_{i}%
}\alpha\left(  e_{i}\right) \\
&  =\sum_{i=1}^{n-1}\left(  -e_{i}\left(  \alpha\left(  e_{i}\right)  \right)
+\alpha\left(  \nabla_{e_{i}}e_{i}\right)  \right) \\
&  =-H\alpha\left(  \nu\right)  .
\end{align*}
By the Bochner formula we have%
\begin{align*}
\int_{M}\left\vert \nabla\alpha\right\vert ^{2}+Ric\left(  \alpha
,\alpha\right)   &  =\int_{\Sigma}\left\langle \nabla_{\nu}\alpha
,\alpha\right\rangle \\
&  =-\int_{\Sigma}H\left[  \alpha\left(  \nu\right)  \right]  ^{2}.
\end{align*}
Clearly $\alpha=0$ if $Ric\geq0$ and $H>0$. Therefore $H^{1}\left(
M,\Sigma\right)  =0$.

For the second part we recall
\[
H^{1}\left(  M\right)  \cong\mathcal{H}_{A}^{1}\left(  M\right)  ,
\]
where $\mathcal{H}_{A}^{1}\left(  M\right)  $ is the space of harmonic
$1$-forms satisfying the absolute boundary condition, i.e. $\alpha
\in\mathcal{H}_{A}^{1}\left(  M\right)  $ iff $d\alpha=0,d^{\ast}\alpha=0$ and
$\alpha\left(  \nu\right)  =0$ on the boundary. Working with a local
orthonormal frame $\left\{  e_{0}=\nu,e_{1},\cdots,e_{n-1}\right\}  $ on
$\Sigma$ we have%
\begin{align*}
\left\langle \nabla_{\nu}\alpha,\alpha\right\rangle  &  =\sum_{i=1}%
^{n-1}\alpha\left(  e_{i}\right)  \nabla_{\nu}\alpha\left(  e_{i}\right) \\
&  =\sum_{i=1}^{n-1}\alpha\left(  e_{i}\right)  \nabla_{e_{i}}\alpha\left(
\nu\right) \\
&  =\sum_{i=1}^{n-1}\alpha\left(  e_{i}\right)  \left[  e_{i}\left(
\alpha\left(  \nu\right)  \right)  -\alpha\left(  \nabla_{e_{i}}\nu\right)
\right] \\
&  =-\sum_{i,j=1}^{n-1}\Pi_{ij}\alpha\left(  e_{i}\right)  \alpha\left(
e_{j}\right)  .
\end{align*}
Therefore%
\[
\int_{M}\left\vert \nabla\alpha\right\vert ^{2}+Ric\left(  \alpha
,\alpha\right)  =-\int_{\Sigma}\sum_{i,j=1}^{n-1}\Pi_{ij}\alpha\left(
e_{i}\right)  \alpha\left(  e_{j}\right)  .
\]
Since $Ric\geq0$ and $\Pi>0$, we must have $\alpha=0$. Therefore $H^{1}\left(
M\right)  =0$.

\begin{remark}
\label{pge0}In the second part if we only assume $\Pi\geq0$, then the same
argument proves that a harmonic form $\alpha\in\mathcal{H}_{A}^{1}\left(
M\right)  $ must be parallel. As $\alpha\left(  \nu\right)  =0$ on the
boundary $\Sigma$ we can write $\alpha=\left\langle X,\cdot\right\rangle $ on
$\Sigma$, where $X$ is a vector field on $\Sigma$. As $\alpha$ is parallel it
is easy to see that $X$ is a parallel vector field on $\Sigma$. Therefore we
conclude that either $H^{1}\left(  M\right)  =0$ or there exists a nonzero
parallel vector field on $\Sigma$.
\end{remark}

In dimension 3 we have the following consequence.

\begin{corollary}
\label{top3}Let $\left(  M^{3},g\right)  $ be a compact Riemannian
$3$-manifold with boundary $\Sigma$. Suppose $Ric\geq0$ and the boundary is
strictly convex. Then the boundary $\Sigma$ is topologically a sphere.
\end{corollary}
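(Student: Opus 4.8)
The plan is to combine the vanishing of $H^{1}(M)$ just established with elementary facts about the topology of odd-dimensional manifolds with boundary. Throughout I take $M$ and $\Sigma=\partial M$ to be connected, and I work with real coefficients so that the analytic conclusion of the preceding Proposition translates directly into a statement about Betti numbers.

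First I would record what the hypotheses give. Strict convexity means $\Pi>0$, so in particular the mean curvature is positive, and the second part of the Proposition applies: since $Ric\geq0$ and $\Pi>0$ we have $H^{1}(M)\cong\mathcal{H}^{1}_{A}(M)=0$, i.e. $b_{1}(M)=0$. The remaining Betti numbers of $M$ are constrained for purely topological reasons: $b_{0}(M)=1$ because $M$ is connected, and $b_{3}(M)=0$ because a compact $3$-manifold with nonempty boundary deformation retracts onto a spine of dimension at most $2$ (equivalently $H_{3}(M;\mathbb{R})=0$). Hence $\chi(M)=b_{0}-b_{1}+b_{2}-b_{3}=1+b_{2}(M)\geq1$.

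Next I would pass from $M$ to its boundary by doubling. Let $DM=M\cup_{\Sigma}M$ be the closed double. By Mayer--Vietoris (inclusion--exclusion for Euler characteristics) one has $\chi(DM)=2\chi(M)-\chi(\Sigma)$, and since $DM$ is a closed odd-dimensional manifold $\chi(DM)=0$. Therefore $\chi(\Sigma)=2\chi(M)\geq2$. On the other hand every closed connected surface satisfies $\chi\leq2$, with equality only for $\mathbb{S}^{2}$ (an orientable surface of genus $g$ has $\chi=2-2g$, and a non-orientable one with $k$ crosscaps has $\chi=2-k\leq1$). Comparing the two bounds forces $\chi(\Sigma)=2$, and hence $\Sigma\cong\mathbb{S}^{2}$.

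I expect no serious obstacle here: the single piece of geometric input is the identity $b_{1}(M)=0$, which is exactly the content of the Proposition, and the rest is bookkeeping. The only points that deserve care are that the topological inputs do not require $M$ or $\Sigma$ to be orientable — the doubling formula, the vanishing $\chi(DM)=0$ for a closed $3$-manifold, and the bound $b_{3}(M)=0$ all hold over $\mathbb{Q}$ (or $\mathbb{Z}/2$) regardless of orientability. A slightly different route, should one prefer to stay with harmonic forms, is to feed $b_{1}(M)=0$ into the long exact cohomology sequence of the pair $(M,\Sigma)$ together with Lefschetz duality to get $b_{1}(\Sigma)=0$ directly; this pins $\Sigma$ down to $\mathbb{S}^{2}$ or $\mathbb{RP}^{2}$, after which the evenness of $\chi(\Sigma)=2\chi(M)$ (so that $\Sigma$ must bound) rules out $\mathbb{RP}^{2}$.
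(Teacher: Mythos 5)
Your argument is correct, but it is not the route the paper takes. The paper's proof feeds $H^{1}(M)=0$ into the long exact sequence of the pair $(M,\Sigma)$ and invokes Poincar\'e--Lefschetz duality $H^{2}(M,\Sigma)\cong H^{1}(M)$ to conclude $H^{1}(\Sigma)=0$, hence $\Sigma\cong\mathbb{S}^{2}$ --- essentially the alternative you sketch in your last sentences. Your main argument instead runs through Euler characteristics: $b_{1}(M)=0$ gives $\chi(M)=1+b_{2}(M)\geq 1$, doubling gives $\chi(\Sigma)=2\chi(M)\geq 2$, and the classification of surfaces forces $\Sigma\cong\mathbb{S}^{2}$. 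Both proofs use exactly the same geometric input (the vanishing $H^{1}(M)\cong\mathcal{H}^{1}_{A}(M)=0$ from the preceding Proposition), so the difference is purely in the topological bookkeeping. Your version is slightly more careful on two points that the paper glosses over: the duality statement as written presupposes orientability, and with real coefficients $H^{1}(\Sigma;\mathbb{R})=0$ only pins $\Sigma$ down to $\mathbb{S}^{2}$ or $\mathbb{RP}^{2}$, the latter needing to be excluded because it does not bound a compact $3$-manifold (its Euler characteristic is odd). Your doubling computation handles both issues at once, since $\chi(DM)=0$ and the Mayer--Vietoris count are insensitive to orientability. The paper's route, on the other hand, is shorter and generalizes more readily to higher-degree cohomology. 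Either way the proof is complete; the only standing hypothesis worth flagging is the connectedness of $\Sigma$, which in fact follows from $H^{1}(M,\Sigma)=0$ and the segment $H^{0}(M)\rightarrow H^{0}(\Sigma)\rightarrow H^{1}(M,\Sigma)$ of the same exact sequence.
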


\begin{proof}
We have the long exact sequence%
\[
\cdots\rightarrow H^{1}\left(  M,\Sigma\right)  \rightarrow H^{1}\left(
M\right)  \rightarrow H^{1}\left(  \Sigma\right)  \rightarrow H^{2}\left(
M,\Sigma\right)  \rightarrow\cdots
\]
By Poincare duality $H^{2}\left(  M,\Sigma\right)  \approx H^{1}\left(
M\right)  $. Since $H^{1}\left(  M\right)  =0$ we must have $H^{1}\left(
\Sigma\right)  =0$, i.e. $\Sigma$ is topologically a sphere.
\end{proof}

In fact the same argument combined with Remark \ref{pge0} yields

\begin{proposition}
Let $\left(  M^{3},g\right)  $ be a compact Riemannian $3$-manifold with
boundary $\Sigma$. Suppose $Ric\geq0$ and the boundary is convex. Then
$\Sigma$ is either a topological sphere or a flat torus.
\end{proposition}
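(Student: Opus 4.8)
The plan is to run the Bochner computation of the preceding Proposition in the borderline case $\Pi\geq 0$ and then to invoke the dichotomy recorded in Remark \ref{pge0}. Since $\Sigma=\partial M$ is a closed surface, the Hodge--de Rham isomorphism $H^{1}(M)\cong\mathcal{H}_{A}^{1}(M)$ together with the boundary Bochner formula shows that every $\alpha\in\mathcal{H}_{A}^{1}(M)$ is parallel. Hence either $H^{1}(M)=0$, or there is a nonzero parallel $1$-form $\alpha$ whose dual $V$ is a globally parallel vector field on $M$, tangent to $\Sigma$ (because $\alpha(\nu)=0$), so that $X:=V|_{\Sigma}$ is a nonzero parallel vector field on $\Sigma$.

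If $H^{1}(M)=0$, I would argue exactly as in Corollary \ref{top3}: the long exact sequence of the pair $(M,\Sigma)$ and the duality $H^{2}(M,\Sigma)\cong H^{1}(M)=0$ force the injection $H^{1}(\Sigma)\hookrightarrow H^{2}(M,\Sigma)=0$, so $H^{1}(\Sigma)=0$ and the orientable closed surface $\Sigma$ is a topological sphere.

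In the remaining case I would read off the geometry of $X$. Its length equals the constant $|V|\neq 0$, so $X$ is nowhere vanishing and Poincar\'e--Hopf gives $\chi(\Sigma)=0$; moreover $\nabla^{\Sigma}X=(\nabla^{M}V)^{\top}=0$ implies $R^{\Sigma}(\cdot,\cdot)X=0$, which in dimension two is equivalent to $K_{\Sigma}\equiv 0$. Thus $(\Sigma,g|_{\Sigma})$ is a closed flat surface with vanishing Euler characteristic, hence a flat torus or a flat Klein bottle.

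The one delicate point, which I expect to be the main obstacle, is the exclusion of the Klein bottle. The flat Klein bottle does admit a nonzero parallel field along the glide axis of its $\mathbb{Z}_{2}$ holonomy, so the parallel-field hypothesis alone is not enough; one must instead use that $\Sigma$ is an \emph{orientable} closed surface, being the boundary of the orientable manifold $M$ whose global outward normal $\nu$ trivializes the normal bundle of $\Sigma$. This is the same orientability already used implicitly in Corollary \ref{top3} to pass from $H^{1}(\Sigma)=0$ to $\Sigma\cong\mathbb{S}^{2}$ rather than $\mathbb{RP}^{2}$. Since the torus is the only orientable closed surface with $\chi=0$, the second case yields a flat torus, completing the proof.
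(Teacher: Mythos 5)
Your proposal is correct and is essentially the paper's own argument: the paper proves this proposition by a one-line appeal to ``the same argument combined with Remark \ref{pge0}'', i.e.\ exactly the dichotomy you run (either $H^{1}(M)=0$ and the long exact sequence of Corollary \ref{top3} gives a sphere, or a nonzero parallel field on $\Sigma$ forces $K_{\Sigma}\equiv0$ and $\chi(\Sigma)=0$). Your explicit treatment of orientability to rule out the flat Klein bottle is a detail the paper leaves implicit (it is already needed for the Poincar\'e--Lefschetz duality $H^{2}(M,\Sigma)\cong H^{1}(M)$ used earlier), and it is handled correctly.
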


Therefore the boundary cannot be a Riemann surface of higher genus.

The above two results in dimension 3 may be deduced from the work of
Meeks-Simon-Yau \cite{MSY}, but the argument here is much more elementary.

\bigskip The same argument works for the following situation.

\begin{proposition}
\label{vanishing_nr}Let $\left(  M^{n},g\right)  $ be a compact Riemannian
manifold with positive Ricci curvature and convex boundary. If the boundary is
convex, then both $H^{1}\left(  M,\Sigma\right)  $ and $H^{1}\left(  M\right)
$ vanish.
\end{proposition}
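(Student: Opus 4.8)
The plan is to rerun the Hodge-theoretic Bochner argument from the first Proposition of this section essentially verbatim, observing that strengthening the curvature hypothesis from $Ric\geq0$ to $Ric>0$ is precisely what permits relaxing the boundary convexity from strict ($\Pi>0$) to non-strict ($\Pi\geq0$). By Hodge theory for compact manifolds with boundary, $H^{1}\left(M,\Sigma\right)\cong\mathcal{H}_{R}^{1}\left(M\right)$ and $H^{1}\left(M\right)\cong\mathcal{H}_{A}^{1}\left(M\right)$, so I need only show that a harmonic $1$-form satisfying the relative (respectively absolute) boundary condition must vanish.

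For the relative case I would take $\alpha\in\mathcal{H}_{R}^{1}\left(M\right)$ and reuse the identity $\nabla_{\nu}\alpha\left(\nu\right)=-H\alpha\left(\nu\right)$ established above, which turns the Bochner formula into
\[
\int_{M}\left\vert\nabla\alpha\right\vert^{2}+Ric\left(\alpha,\alpha\right)=-\int_{\Sigma}H\left[\alpha\left(\nu\right)\right]^{2}.
\]
The key point is the sign asymmetry: convexity gives $H=\mathrm{tr}\,\Pi\geq0$, so the right-hand side is $\leq0$, while the left-hand side is $\geq0$ with the term $\int_{M}Ric\left(\alpha,\alpha\right)$ vanishing only when $\alpha\equiv0$ since $Ric>0$ is positive definite. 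Both sides are therefore forced to be zero, and positive definiteness of $Ric$ yields $\alpha=0$. The absolute case is handled identically, using instead the earlier computation
\[
\int_{M}\left\vert\nabla\alpha\right\vert^{2}+Ric\left(\alpha,\alpha\right)=-\int_{\Sigma}\sum_{i,j=1}^{n-1}\Pi_{ij}\alpha\left(e_{i}\right)\alpha\left(e_{j}\right),
\]
whose boundary integrand is nonnegative because $\Pi\geq0$; the same sign analysis then forces $\alpha=0$, so both $\mathcal{H}_{R}^{1}\left(M\right)$ and $\mathcal{H}_{A}^{1}\left(M\right)$ are trivial.

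Since this merely replays a calculation already carried out, I anticipate no genuine technical obstacle. The one point requiring care is the sign bookkeeping: one must verify in each case that the boundary integral is nonpositive, so that it cannot offset the strictly positive interior Ricci contribution. This is exactly where the trade-off between strict positivity of $Ric$ and mere nonnegativity of $\Pi$ enters, and it is the heart of why the weaker convexity hypothesis still suffices to force the vanishing of both cohomology groups.
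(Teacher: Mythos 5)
Your proposal is correct and is exactly the argument the paper intends: the paper introduces this proposition with the phrase ``the same argument works,'' referring to the Bochner/Hodge computation for $\mathcal{H}_{R}^{1}(M)$ and $\mathcal{H}_{A}^{1}(M)$ given just before, and your sign analysis (strict positivity of $Ric$ in the interior compensating for mere nonnegativity of $H$ and $\Pi$ on the boundary) is precisely the right trade-off.
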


When the Ricci curvature has a negative lower bound, we can also prove the
vanishing of $H^{1}\left(  M,\Sigma\right)  $ if the boundary has sufficiently
large mean curvature.

\begin{proposition}
Let $\left(  M^{n},g\right)  $ be a compact Riemannian manifold with boundary
$\Sigma$. Suppose $Ric\geq-\left(  n-1\right)  $. If $H\geq\left(  n-1\right)
$, then $H^{1}\left(  M,\Sigma\right)  =0$.
\end{proposition}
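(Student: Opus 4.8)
The plan is to run the same harmonic-form argument used for the previous propositions, supplying two analytic inputs to compensate for the fact that the pointwise sign of the Bochner integrand is no longer controlled. By Hodge theory for manifolds with boundary, $H^1(M,\Sigma)\cong\mathcal{H}_R^1(M)$, the space of harmonic $1$-forms $\alpha$ satisfying the relative boundary condition $\alpha(e_i)=0$ for $i=1,\dots,n-1$, so it suffices to prove every such $\alpha$ vanishes. Exactly as in the proof of the first proposition, the boundary condition gives $|\alpha|^2=[\alpha(\nu)]^2$ on $\Sigma$ together with $\nabla_\nu\alpha(\nu)=-H\alpha(\nu)$, and the Bochner formula integrates to the master identity
\[
\int_M|\nabla\alpha|^2+\int_M Ric(\alpha,\alpha)+\int_\Sigma H|\alpha|^2=0 .
\]
The clean sign argument available when $Ric\geq 0$ now fails, since under $Ric\geq-(n-1)$ the middle term may be as negative as $-(n-1)\int_M|\alpha|^2$.

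To salvage the estimate I would first descend from the form to the scalar $u=|\alpha|$ by invoking the refined Kato inequality for harmonic $1$-forms, $|\nabla\alpha|^2\geq\frac{n}{n-1}|\nabla u|^2$. Substituting this into the master identity and using the hypotheses $Ric\geq-(n-1)$ and $H\geq n-1$ produces
\[
\frac{n}{n-1}\int_M|\nabla u|^2+(n-1)\int_\Sigma u^2\leq(n-1)\int_M u^2 .
\]
The decisive step is then to establish the \emph{reverse}, sharp Poincar\'e-type inequality
\[
(n-1)\int_M u^2\leq\frac{n}{n-1}\int_M|\nabla u|^2+(n-1)\int_\Sigma u^2 ,
\]
for then chaining the two displays forces equality throughout, hence $u\equiv 0$ and $\alpha=0$. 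A natural first attempt at this Poincar\'e inequality is a calibration via the distance-to-boundary function $\rho$: under $Ric\geq-(n-1)$ and boundary mean curvature $\geq n-1$, the Riccati (Laplacian) comparison theorem gives $\Delta\rho\leq-(n-1)$ in the barrier sense, with $|\nabla\rho|=1$ and $\partial_\nu\rho=-1$ on $\Sigma$; integrating $u^2\Delta\rho$ by parts yields $(n-1)\int_M u^2\leq\int_\Sigma u^2+2\int_M u\langle\nabla u,\nabla\rho\rangle$.

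The hard part is getting the \emph{constants} to match. The distance-function calibration forces a Cauchy--Schwarz step, $2u\langle\nabla u,\nabla\rho\rangle\leq c\,u^2+\tfrac1c|\nabla u|^2$, which loses a factor and only closes the argument cleanly in high dimensions; it does not recover the exact constant $\frac{n}{n-1}$ dictated by refined Kato. I expect the genuine route to the sharp Poincar\'e inequality to be Reilly's formula, applied to the solution of an auxiliary boundary value problem, where the elementary trace bound $\frac1n(\Delta f)^2\leq|\nabla^2 f|^2$ mirrors the refined Kato constant and so supplies precisely the factor $\frac{n}{n-1}$; the interior Ricci term and the boundary mean-curvature term in Reilly's identity then absorb the two hypotheses simultaneously. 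The two remaining technical obstacles are controlling $\rho$ (or the comparison function) across the cut and focal locus of $\Sigma$, which I would handle by the standard barrier/distributional reformulation of the comparison inequality, and analyzing the equality case to rule out a hyperbolic model and thereby conclude genuine vanishing rather than mere rigidity.
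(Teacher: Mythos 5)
Your proposal assembles the right ingredients (Hodge theory, the Bochner identity, the refined Kato inequality $|\nabla\alpha|^{2}\geq\frac{n}{n-1}\left\vert \nabla\left\vert \alpha\right\vert \right\vert ^{2}$, and the comparison $\Delta\rho\leq-(n-1)$), but it does not close: the entire argument is made to rest on a ``reverse Poincar\'e inequality''
\[
(n-1)\int_{M}u^{2}\leq\frac{n}{n-1}\int_{M}\left\vert \nabla u\right\vert
^{2}+(n-1)\int_{\Sigma}u^{2}
\]
that you do not prove. You candidly note that the calibration by $\rho$ plus Cauchy--Schwarz loses a constant, and the suggested rescue via Reilly's formula is only a guess, not an argument. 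Moreover, even granting that inequality, chaining it with the Bochner estimate yields only \emph{equality} throughout (Kato equality, $Ric(\alpha,\alpha)=-(n-1)|\alpha|^{2}$, etc.), i.e.\ a rigidity statement, not $\alpha=0$; the final step of ruling out the equality case is also left undone. So there are two genuine gaps, and the first one is essentially the whole difficulty of the proposition.

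The paper avoids the integral route altogether and runs a \emph{pointwise} maximum principle instead. The key device you are missing is the choice of exponent: setting $f=\left\vert \alpha\right\vert ^{(n-2)/(n-1)}$, the refined Kato inequality turns the Bochner inequality into the clean differential inequality $\Delta f\geq-(n-2)f$ (the gradient term is exactly absorbed by this power). Separately, $\phi=e^{(n-1)\rho/2}$ satisfies $\Delta\phi\leq-\frac{(n-1)^{2}}{4}\phi$ from the Laplacian comparison. The quotient $u=f/\phi$ then satisfies $\Delta u\geq\frac{(n-3)^{2}}{4}u-2\phi^{-1}\left\langle \nabla u,\nabla\phi\right\rangle$, so a nonzero $u$ must attain its positive maximum on $\Sigma$ with $\partial u/\partial\nu\geq0$ there; but the relative boundary condition gives $\nabla_{\nu}\alpha(\nu)=-H\alpha(\nu)$, whence $\partial u/\partial\nu\leq-u(n-3)/2$ on $\Sigma$, a contradiction for $n\geq4$ (with a separate Hopf-lemma/cut-locus argument for $n=3$). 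If you want to salvage your outline, you should replace the global Poincar\'e inequality by this quotient maximum principle; as written, the proposal does not constitute a proof.
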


The proof is more complicated. \ We first recall the following result which
can be proved by classic methods.

\begin{proposition}
Let $\left(  M^{n},g\right)  $ be a compact Riemannian manifold with
$Ric\geq-\left(  n-1\right)  $. Let $\rho$ be the distance function to the
boundary. Suppose the mean curvature of the boundary satisfies $H\geq\left(
n-1\right)  $. Then in the support sense%
\[
\Delta\rho\leq-\left(  n-1\right)  .
\]

\end{proposition}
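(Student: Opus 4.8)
The plan is to establish the bound pointwise along the minimizing normal geodesics issuing from $\partial M$ by a Riccati comparison, and then to promote it to the support sense at the cut locus of the boundary by a standard barrier argument.

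First I would let $\rho$ be smooth near an interior point lying on a minimizing unit-speed geodesic $\gamma:[0,t_{0}]\to M$ with $\gamma(0)=p_{0}\in\partial M$ and $\gamma'(0)$ the inward unit normal, so that $\rho(\gamma(t))=t$ for $t$ below the distance to the cut locus of $\partial M$. On this range $|\nabla\rho|\equiv1$ and $\mathrm{Hess}\,\rho(\nabla\rho,\cdot)=0$, so $U:=\mathrm{Hess}\,\rho$ is supported on the $(n-1)$-dimensional distribution $(\nabla\rho)^{\perp}$. Writing $\phi(t)=\Delta\rho(\gamma(t))=\operatorname{tr}U$, the Riccati equation for the Hessian of a distance function together with its trace gives $\phi'+|U|^{2}+Ric(\gamma',\gamma')=0$; Cauchy-Schwarz yields $|U|^{2}\ge\phi^{2}/(n-1)$, and $Ric\ge-(n-1)$ gives
\[
\phi'\le-\frac{\phi^{2}}{n-1}+(n-1).
\]
The initial value is $\phi(0)=\Delta\rho|_{\partial M}=-H$, because $\nabla\rho=-\nu$ on $\partial M$ and the second fundamental form with respect to $-\nu$ is $-\Pi$; thus $\phi(0)=-H\le-(n-1)$ by hypothesis.

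Next I would run an elementary ODE comparison. The right-hand side factors as $\tfrac{1}{n-1}(n-1-\phi)(n-1+\phi)$, so $\phi\equiv-(n-1)$ is an equilibrium of the corresponding equation. Setting $g=\phi+(n-1)$ one gets $g'\le\tfrac{g}{n-1}\bigl(2(n-1)-g\bigr)$ with $g(0)\le0$; while $g$ stays below $n-1$ this reads $g'\le 2g$, and a Gr\"onwall argument forces $g(t)\le0$ for all $t$ before the cut locus. Hence $\Delta\rho\le-(n-1)$ holds pointwise wherever $\rho$ is smooth, i.e. on the complement of the cut locus of $\partial M$, a closed set of measure zero.

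Finally, to obtain the inequality in the support sense at a cut point $x_{0}$, I would produce a smooth upper support function. The function $\rho$ is locally semiconcave as a distance function, so its superdifferential is nonempty and the singular part of its distributional Laplacian is nonpositive; concretely, when $x_{0}$ has two distinct foot points the distance $d(\cdot,W)$ to a small piece $W\subset\partial M$ containing the relevant foot point $p_{0}$ in its interior is smooth near $x_{0}$, satisfies $d(\cdot,W)\ge\rho$ with equality at $x_{0}$, and obeys $\Delta d(\cdot,W)(x_{0})\le-(n-1)$ by the computation above. The genuinely delicate case, and the main obstacle, is when $x_{0}$ is a focal point of $\partial M$ along $\gamma$: there $\phi(t)\to-\infty$ as $t\nearrow t_{0}$, so $\rho$ is strongly concave in the focusing directions and an upper support with Laplacian $\le-(n-1)$ still exists, but constructing it requires a limiting argument (equivalently, invoking that the a.e. bound on a semiconcave function passes to the support sense). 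The Riccati and ODE steps are routine; this cut-locus analysis is where the real care lies.
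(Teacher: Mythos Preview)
The paper does not actually prove this proposition; it introduces it with the phrase ``We first recall the following result which can be proved by classic methods'' and then uses it without argument. Your proposal is precisely that classical method---the Riccati identity $\phi'+|\mathrm{Hess}\,\rho|^{2}+Ric(\gamma',\gamma')=0$ along a minimizing normal geodesic, Cauchy--Schwarz on the rank-$(n-1)$ Hessian to obtain $\phi'\le -(n-1)^{-1}\phi^{2}+(n-1)$, the initial condition $\phi(0)=-H\le-(n-1)$, and an ODE comparison---followed by the standard barrier argument at the cut locus. The argument is correct.

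One small simplification: the bound $g'\le 2g$ holds unconditionally, since $g'\le 2g-g^{2}/(n-1)\le 2g$ for every real $g$; the clause ``while $g$ stays below $n-1$'' can be dropped and Gr\"onwall gives $g(t)\le g(0)e^{2t}\le 0$ at once. Your caution about the focal-point case in the support-sense upgrade is well placed; that is indeed where the care lies, and the semiconcavity route you indicate (or, equivalently, a hypersurface version of Calabi's barrier trick) is the standard way to close it.
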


We compute%
\begin{align*}
\Delta e^{c\rho}  &  =e^{c\rho}\left[  c\Delta\rho+c^{2}\left\vert \nabla
\rho\right\vert ^{2}\right] \\
&  \leq e^{c\rho}\left[  -\left(  n-1\right)  c+c^{2}\right] \\
&  =e^{c\rho}\left[  \left(  c-\frac{n-1}{2}\right)  ^{2}-\frac{\left(
n-1\right)  ^{2}}{4}\right]  .
\end{align*}
Let $\phi=e^{\left(  n-1\right)  \rho/2}$. We have%
\begin{equation}
\Delta\phi\leq-\frac{\left(  n-1\right)  ^{2}}{4}\phi. \label{lapf}%
\end{equation}
It is well know that this implies that the first Dirichlet eigenvalue
$\lambda_{1}\geq\frac{\left(  n-1\right)  ^{2}}{4}$.

We now prove the first part of Proposition \ref{vanishing_nr}. Let $\alpha
\in\mathcal{H}_{R}^{1}\left(  M\right)  $. By a computation due to Yau we
have
\[
\left\vert \nabla\alpha\right\vert ^{2}\geq\frac{n}{n-1}\left\vert
\nabla\left\vert \alpha\right\vert \right\vert ^{2}.
\]
By the Bochner formula we have%
\begin{align*}
\frac{1}{2}\Delta\left\vert \alpha\right\vert ^{2}  &  =\left\vert
\nabla\alpha\right\vert ^{2}+Ric\left(  \alpha,\alpha\right) \\
&  \geq\frac{n}{n-1}\left\vert \nabla\left\vert \alpha\right\vert \right\vert
^{2}-\left(  n-1\right)  \left\vert \alpha\right\vert ^{2}.
\end{align*}
Therefore
\[
\left\vert \alpha\right\vert \Delta\left\vert \alpha\right\vert \geq\frac
{1}{n-1}\left\vert \nabla\left\vert \alpha\right\vert \right\vert ^{2}-\left(
n-1\right)  \left\vert \alpha\right\vert ^{2}.
\]
Let $f=\left\vert \alpha\right\vert ^{\left(  n-2\right)  /\left(  n-1\right)
} $. Direct calculation yields%
\[
\Delta f\geq-\left(  n-2\right)  f.
\]
Let $u=f/\phi$. Direct calculation yields%
\begin{align*}
\Delta u  &  \geq\left[  \frac{\left(  n-1\right)  ^{2}}{4}-\left(
n-2\right)  \right]  u-2\phi^{-1}\left\langle \nabla u,\nabla\phi\right\rangle
\\
&  =\frac{\left(  n-3\right)  ^{2}}{4}u-2\phi^{-1}\left\langle \nabla
u,\nabla\phi\right\rangle
\end{align*}
Suppose that $f$ is not identically zero. By the maximum principle $u$ must
achieve its positive maximum somewhere on the boundary and furthermore at this
point we must have
\[
\frac{\partial u}{\partial\nu}\geq0.
\]
On the other hand on the boundary, as $H\geq n-1$%
\begin{align*}
\frac{\partial u}{\partial\nu}  &  =u\left[  \frac{n-2}{n-1}\frac{\left\langle
\nabla_{\nu}\alpha,\alpha\right\rangle }{\left\vert \alpha\right\vert ^{2}%
}+\frac{n-1}{2}\right] \\
&  =u\left[  -\frac{n-2}{n-1}H+\frac{n-1}{2}\right] \\
&  \leq u\left[  -\left(  n-2\right)  +\frac{n-1}{2}\right] \\
&  =-u\left(  n-3\right)  /2.
\end{align*}
This is strictly negative when $n\geq4$ and hence a contradiction. Therefore
$\alpha$ is identically zero. When $n=3$ and if $\alpha$ is not identically
zero, then by the Hopf lemma $u$ must be a positive constant. By scaling
$\alpha$ we can assume that $u\equiv1$ or $\phi=f$. Therefore $-\Delta
\phi=\phi$. By elliptic regularity $\phi$ is smooth. From the proof of
\ (\ref{lapf}) it follows that $\rho$ is smooth everywhere and $\left\vert
\nabla\rho\right\vert \equiv1$. But this is impossible as $\rho$ is not smooth
at a cut point, e.g. at a point where it achieves its maximum. Therefore we
must have $\alpha=0$ too when $n=3$.

\bigskip The same argument can be used to prove the following: Let $\left(
M^{n},g\right)  $ be a compact Riemannian manifold with boundary $\Sigma$.
Suppose $Ric\geq-\left(  n-1\right)  $. If the second fundamental form of
$\Sigma$ satisfies $\Pi>\left(  n-1\right)  /\sqrt{2\left(  n-2\right)  }$,
then $H^{1}\left(  M\right)  =0$. The only difference is that at the end we
have for $\alpha\in\mathcal{H}_{A}^{1}\left(  M\right)  $%
\begin{align*}
\frac{\partial u}{\partial\nu}  &  =u\left[  \frac{n-2}{n-1}\frac{\left\langle
\nabla_{\nu}\alpha,\alpha\right\rangle }{\left\vert \alpha\right\vert ^{2}%
}+\frac{n-1}{2}\frac{\sinh R}{\cosh R}\right] \\
&  =u\left[  -\frac{n-2}{n-1}\frac{\Pi_{ij}\alpha\left(  e_{i}\right)
\alpha\left(  e_{i}\right)  }{\left\vert \alpha\right\vert ^{2}}+\frac{n-1}%
{2}\frac{\sinh R}{\cosh R}\right] \\
&  \leq u\left[  -\frac{n-2}{n-1}\frac{\cosh R}{\sinh R}+\frac{n-1}{2}%
\frac{\sinh R}{\cosh R}\right]  .
\end{align*}
Is the constant sharp? It seems reasonable to expect $H^{1}\left(  M\right)
=0 $ if $\Pi>1$.

\bigskip

\bigskip

\section{\bigskip On the size of the boundary}

In this section we prove some estimates on the size of the boundary, in
particular we show that Conjecture \ref{bsnn} is true in dimension 3. First we
recall a result in Xia \cite{X}.

\begin{proposition}
\label{Reilly}Let $(M,g)$ be a compact Riemannian manifold with boundary
$\Sigma$. Suppose $Ric(g)\geq0$ and $\Pi\geq1$. Then $\lambda_{1}(\Sigma)\geq
n-1$ and the equality holds iff $(M,g)$ is isometric to the unit ball in
Euclidean space $%
\mathbb{R}
^{n}$.
\end{proposition}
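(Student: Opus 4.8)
The plan is to apply Reilly's formula to the harmonic extension of a first eigenfunction of $\Sigma$. Since $\Sigma=\partial M$ is a closed manifold, its Laplacian $\Delta_{\Sigma}$ has discrete spectrum; let $\lambda_{1}=\lambda_{1}(\Sigma)>0$ be the first nonzero eigenvalue and $\phi$ an associated eigenfunction, so $\Delta_{\Sigma}\phi=-\lambda_{1}\phi$ and $\int_{\Sigma}\phi=0$. I would solve the Dirichlet problem $\Delta f=0$ in $M$, $f=\phi$ on $\Sigma$, and write $f_{\nu}=\partial f/\partial\nu$. Reilly's formula reads
\[
\int_{M}\left[ (\Delta f)^{2}-\left\vert \nabla^{2}f\right\vert ^{2}-Ric(\nabla f,\nabla f)\right] dv=\int_{\Sigma}\left[ 2f_{\nu}\Delta_{\Sigma}\phi+Hf_{\nu}^{2}+\Pi(\nabla_{\Sigma}\phi,\nabla_{\Sigma}\phi)\right] d\sigma .
\]
Because $\Delta f=0$ and $Ric\geq0$, the left-hand side equals $-\int_{M}\left\vert \nabla^{2}f\right\vert ^{2}-\int_{M}Ric(\nabla f,\nabla f)\leq0$, so the boundary integral on the right is $\leq0$.

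Next I would estimate that boundary integral from below using the hypotheses. Taking the trace of $\Pi\geq1$ gives $H\geq n-1$, while $\Pi\geq1$ directly gives $\Pi(\nabla_{\Sigma}\phi,\nabla_{\Sigma}\phi)\geq\left\vert \nabla_{\Sigma}\phi\right\vert ^{2}$, whose integral equals $\lambda_{1}\int_{\Sigma}\phi^{2}$. Writing $a=\int_{\Sigma}f_{\nu}^{2}$, $b=\int_{\Sigma}f_{\nu}\phi$, $c=\int_{\Sigma}\phi^{2}$ and using $\Delta_{\Sigma}\phi=-\lambda_{1}\phi$, the inequality becomes
\[
(n-1)a-2\lambda_{1}b+\lambda_{1}c\leq0 .
\]
Green's identity gives $b=\int_{\Sigma}\phi f_{\nu}=\int_{M}\left\vert \nabla f\right\vert ^{2}\geq0$, and Cauchy--Schwarz gives $b\leq\sqrt{ac}$, with $c>0$ since $\phi\not\equiv0$ and $a>0$ since otherwise the displayed inequality forces $\lambda_{1}c\leq0$. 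Hence $(n-1)a+\lambda_{1}c\leq2\lambda_{1}b\leq2\lambda_{1}\sqrt{ac}$, while AM--GM gives $(n-1)a+\lambda_{1}c\geq2\sqrt{(n-1)\lambda_{1}\,ac}$. Dividing by $2\sqrt{ac}$ yields $\sqrt{(n-1)\lambda_{1}}\leq\lambda_{1}$, i.e. $\lambda_{1}\geq n-1$.

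For the rigidity I would trace equality back through the argument. If $\lambda_{1}=n-1$ then every inequality above is an equality: $\nabla^{2}f\equiv0$ (so $\nabla f$ is a nonzero parallel field), $\int_{\Sigma}(H-(n-1))f_{\nu}^{2}=0$, $\Pi(\nabla_{\Sigma}\phi,\nabla_{\Sigma}\phi)=\left\vert\nabla_{\Sigma}\phi\right\vert^{2}$, and the Cauchy--Schwarz and AM--GM equalities force $f_{\nu}$ to be a constant multiple of $\phi$, which one normalizes to $f_{\nu}=\phi$. Since $\phi$ vanishes only on a nodal set of measure zero, $H\equiv n-1$; and from $\nabla^{2}f=0$ the Gauss-type identity $\nabla^{2}_{\Sigma}\phi=-f_{\nu}\Pi=-\phi\,\Pi$ shows that where $\phi\neq0$ the shape operator $-\phi^{-1}\nabla^{2}_{\Sigma}\phi$ has trace $n-1$ and all eigenvalues $\geq1$, hence equals the identity; by continuity $\Pi\equiv1$ on $\Sigma$, so $\Sigma$ is totally umbilic with principal curvatures $1$. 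The remaining and most delicate step is to upgrade this boundary data, together with the parallel field $\nabla f$, to a global isometry $M\cong\overline{\mathbb{B}^{n}}$: the natural route is to show the full first eigenspace is $n$-dimensional, producing a parallel orthonormal frame $\{\nabla f_{i}\}$ that forces $M$ to be flat, after which a flat manifold with totally umbilic boundary of constant principal curvature $1$ must be the unit ball. I expect this reconstruction of the Euclidean ball to be the main obstacle, the eigenvalue bound itself following cleanly from Reilly's formula.
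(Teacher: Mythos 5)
Your proof of the inequality $\lambda_{1}(\Sigma)\geq n-1$ is correct and is essentially the paper's argument: Reilly's formula applied to the harmonic extension of a first eigenfunction, with $H\geq n-1$ and $\Pi(\nabla_{\Sigma}\phi,\nabla_{\Sigma}\phi)\geq|\nabla_{\Sigma}\phi|^{2}$ feeding into the quadratic inequality $(n-1)a-2\lambda_{1}b+\lambda_{1}c\leq0$. The paper finishes by completing the square, writing this as $\frac{\lambda_{1}(\lambda_{1}-n+1)}{n-1}\int_{\Sigma}\phi^{2}\geq\int_{\Sigma}\bigl(f_{\nu}-\frac{\lambda_{1}}{n-1}\phi\bigr)^{2}\geq0$, rather than via Cauchy--Schwarz and AM--GM, but that is a cosmetic difference; your version reaches the same conclusion and identifies the same equality conditions.

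The rigidity part, however, is left with a genuine gap that you yourself flag: you stop at the boundary data ($\Pi\equiv1$, $f_{\nu}=\phi$, a parallel field $\nabla f$) and a plan to show the first eigenspace is $n$-dimensional, without saying how. The missing ingredient is Obata's theorem applied on $\Sigma$. You already derived the key identity $\nabla^{2}_{\Sigma}\phi=-f_{\nu}\Pi=-\phi\,g|_{\Sigma}$ from $\nabla^{2}f=0$, $f_{\nu}=\phi$ and $\Pi=g|_{\Sigma}$; Obata's theorem then says that a closed manifold carrying a nonconstant solution of $\nabla^{2}_{\Sigma}\phi=-\phi\,g|_{\Sigma}$ is isometric to the round sphere $\mathbb{S}^{n-1}$. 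This is what makes the first eigenspace $n$-dimensional: one cannot establish that directly from a single eigenfunction, and a single parallel field $\nabla f$ does not force $M$ to be flat. Once $\Sigma\cong\mathbb{S}^{n-1}$, take a standard basis $f_{1},\dots,f_{n}$ of the first eigenspace and their harmonic extensions $u_{1},\dots,u_{n}$; rerunning the equality analysis for each shows every $\nabla u_{i}$ is parallel, and the map $U=(u_{1},\dots,u_{n})$ isometrically embeds $M$ onto the unit ball. So your outline is pointed in the right direction, but the step you call ``the main obstacle'' is resolved in the paper by invoking Obata on the boundary, not by a flatness argument from umbilicity alone.
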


The proof is based on Reilly's formula \cite{Re}. For completeness and
comparison later, we present the proof. Let $u$ be the solution of the
following equation%
\[
\left\{
\begin{array}
[c]{c}%
\Delta u=0\text{ \ \ on }M,\\
u|_{\Sigma}=f
\end{array}
\right.
\]
where $f$ is a first eigenfunction on $\Sigma$, i.e. $-\bigtriangleup_{\Sigma
}f=\lambda_{1}f$. Let $\chi=\frac{\partial u}{\partial\nu}$ with $\nu$ being
the outer unit normal. By Reilly's formula%
\begin{align*}
&  \int_{M}\left(  \Delta u\right)  ^{2}-\left\vert D^{2}u\right\vert
^{2}-Ric\left(  \nabla u,\nabla u\right) \\
&  =\int_{\Sigma}\left[  2\chi\Delta_{\Sigma}f+H\chi^{2}+\Pi\left(  \nabla
f,\nabla f\right)  \right]  d\sigma\\
&  \geq\int_{\Sigma}-2\lambda_{1}\chi f+\left(  n-1\right)  \chi
^{2}+\left\vert \nabla f\right\vert ^{2}%
\end{align*}
As $\Delta u=0$ and $\int_{\Sigma}\left\vert \nabla f\right\vert ^{2}%
=\lambda_{1}\int_{\Sigma}f^{2}$%
\[
\int_{\Sigma}-2\lambda_{1}\chi f+\left(  n-1\right)  \chi^{2}+\lambda_{1}%
f^{2}\leq0,
\]
whence%
\[
\frac{\lambda_{1}(\lambda_{1}-n+1)}{n-1}\int_{\partial M}f^{2}\geq
\int_{\partial M}(\chi-\frac{\lambda_{1}}{n-1}f)^{2}\geq0.
\]
Therefore $\lambda_{1}\geq n-1$.

If $\lambda_{1}=n-1$, then we must have $D^{2}u=0,\chi=f$ and $\Pi=g|_{\Sigma
}$. As a consequence we have $D^{2}f=-fg$ on $\partial\Sigma$. By the well
known Obata theorem $\partial M$ is isometric to the standard sphere $S^{n-1}%
$. Let $f_{1},\cdots,f_{n}$ be a standard basis of the first eigenspace on
$\partial M$ $\cong S^{n-1}$ and $u_{1},\cdots,u_{n}$ the corresponding
harmonic extensions on $M$. We know that $\nabla u_{1,}\cdots,\nabla u_{n}$
are parallel vector fields on $M$. It is then easy to see that $U=(u_{1}%
,\cdots,u_{n})$ isometrically embeds $M$ into $%
\mathbb{R}
^{n}$ with the image the unit ball.

This is basically the same argument used by Choi and Wang \cite{CW} to prove
that the 1st eigenvalue of \ an embedded minimal hypersurface $\Sigma
^{n-1}\subset$ $\mathbb{S}^{n}$ is at least $\left(  n-1\right)  /2$. But a
conjecture of Yau the 1st eigenvalue should equal to $n-1$. By the same
argument we have the following estimate for a general compact Riemannian
manifold with $Ric\geq n$ and with a convex boundary.

\begin{proposition}
\label{lam+}Let $(M^{n},g)$ be a compact Riemannian manifold with boundary
$\Sigma$. Suppose $Ric(g)\geq n-1$ and $\Pi\geq0$. Then $\lambda_{1}%
(\Sigma)\geq\left(  n-1\right)  /2$.
\end{proposition}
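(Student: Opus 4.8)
The plan is to run the same Reilly-formula argument used for Proposition \ref{Reilly}, but to rebalance which terms carry the estimate: the strength now lies in the interior Ricci bound rather than on the boundary. First I would let $f$ be a first eigenfunction on $\Sigma$, so that $-\Delta_{\Sigma}f=\lambda_{1}f$ with $\lambda_{1}=\lambda_{1}(\Sigma)>0$, and let $u$ be its harmonic extension, $\Delta u=0$ on $M$ and $u|_{\Sigma}=f$. Set $\chi=\partial u/\partial\nu$. Since $\lambda_{1}>0$ the eigenfunction $f$ is nonconstant, hence $u$ is nonconstant and $\int_{M}|\nabla u|^{2}>0$; this positivity is exactly what the final division will rely on.

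Next I would apply Reilly's formula precisely as displayed in the proof of Proposition \ref{Reilly}. With $\Delta u=0$ the left-hand side collapses and the identity becomes
\[
0=\int_{M}|D^{2}u|^{2}+Ric(\nabla u,\nabla u)+\int_{\Sigma}\left[2\chi\Delta_{\Sigma}f+H\chi^{2}+\Pi(\nabla f,\nabla f)\right]d\sigma.
\]
The crucial step is to rewrite the boundary cross-term. Using $\Delta_{\Sigma}f=-\lambda_{1}f$ together with the divergence identity $\int_{\Sigma}\chi f=\int_{\Sigma}u\,\partial u/\partial\nu\,d\sigma=\int_{M}|\nabla u|^{2}$ (valid because $u$ is harmonic), I obtain $\int_{\Sigma}2\chi\Delta_{\Sigma}f=-2\lambda_{1}\int_{M}|\nabla u|^{2}$. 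This feeds the boundary term back into the interior and replaces the role played by completing the square on $\Sigma$ in the proof of Proposition \ref{Reilly}.

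Then I would estimate the remaining terms from below. Because $Ric\geq n-1$ we have $\int_{M}Ric(\nabla u,\nabla u)\geq(n-1)\int_{M}|\nabla u|^{2}$; because $\Pi\geq0$ (so $H=\mathrm{tr}\,\Pi\geq0$), the two boundary terms $\int_{\Sigma}H\chi^{2}$ and $\int_{\Sigma}\Pi(\nabla f,\nabla f)$ are nonnegative and may simply be discarded; and $\int_{M}|D^{2}u|^{2}\geq0$. Substituting these into the identity yields
\[
0\geq(n-1)\int_{M}|\nabla u|^{2}-2\lambda_{1}\int_{M}|\nabla u|^{2}=(n-1-2\lambda_{1})\int_{M}|\nabla u|^{2}.
\]
Dividing by $\int_{M}|\nabla u|^{2}>0$ gives $\lambda_{1}\geq(n-1)/2$, as claimed.

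I do not expect a serious obstacle, since this is a direct adaptation of the proof of Proposition \ref{Reilly}. The one point requiring genuine care — the content of the rebalancing — is that under the weaker convexity hypothesis $\Pi\geq0$ the boundary can no longer supply the quadratic lower bound that drove the earlier Obata-type argument, so one must instead route the cross-term into the interior via $\int_{\Sigma}\chi f=\int_{M}|\nabla u|^{2}$ and let the strengthened bound $Ric\geq n-1$ do the work. This reallocation also accounts transparently for the loss of a factor of two: previously the boundary contributed both an extra $(n-1)\int_{\Sigma}\chi^{2}$ and the full $\lambda_{1}\int_{\Sigma}f^{2}$, whereas here only the single interior Ricci term survives.
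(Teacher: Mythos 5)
Your proof is correct and is essentially the paper's own argument: the same harmonic extension of a first eigenfunction, the same application of Reilly's formula with the boundary terms $\int_{\Sigma}H\chi^{2}$ and $\int_{\Sigma}\Pi(\nabla f,\nabla f)$ discarded via $\Pi\geq0$, and the same identity $\int_{\Sigma}\chi f=\int_{M}|\nabla u|^{2}$ feeding the cross-term into the interior where $Ric\geq n-1$ closes the estimate. The only difference is cosmetic (you write Reilly's formula with everything on one side), and your explicit remark that $\int_{M}|\nabla u|^{2}>0$ is a detail the paper leaves implicit.
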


\begin{remark}
In view of Yau's conjecture, we also conjecture that in this case the best
lower bound is $n-1$.
\end{remark}

\begin{proof}
Let $u$ be the solution of the following equation%
\begin{equation}
\left\{
\begin{array}
[c]{c}%
\Delta u=0\text{ \ \ on }M,\\
u|_{\Sigma}=f
\end{array}
\right.
\end{equation}
where $f$ is a first eigenfunction on $\Sigma$, i.e. $-\bigtriangleup_{\Sigma
}f=\lambda_{1}f$. Let $\chi=\frac{\partial u}{\partial\nu}$ with $\nu$ being
the outer unit normal. By Reilly's formula%
\begin{align*}
&  \int_{M}\left(  \Delta u\right)  ^{2}-\left\vert D^{2}u\right\vert
^{2}-Ric\left(  \nabla u,\nabla u\right) \\
&  =\int_{\Sigma}\left[  2\chi\Delta_{\Sigma}f+H\chi^{2}+\Pi\left(  \nabla
f,\nabla f\right)  \right]  d\sigma\\
&  \geq\int_{\Sigma}-2\lambda_{1}\chi f,
\end{align*}
as $\Pi\geq0$. Thus we get%
\[
2\lambda_{1}\int_{\Sigma}\chi f\geq\left(  n-1\right)  \int_{M}\left\vert
\nabla u\right\vert ^{2}%
\]
From the equation of $u$ we have $\int_{\Sigma}\left\vert \nabla u\right\vert
^{2}=\int_{\Sigma}f\chi$. Thus%
\[
\left[  2\lambda_{1}-\left(  n-1\right)  \right]  \int_{M}\left\vert \nabla
u\right\vert ^{2}\geq0,
\]
Therefore $\lambda_{1}\geq\left(  n-1\right)  /2$.
\end{proof}

\bigskip We will also need the following result due to Ros \cite{Ros}, which
was also proved by Reilly's formula.

\begin{theorem}
\label{Ros}(Ros) Let $(M,g)$ be a compact Riemannian manifold with boundary.
If $Ric\geq0$ and the mean curvature $H$ of $\partial M$ is positive, then%
\[
\int_{\partial M}\frac{1}{H}d\sigma\geq\frac{n}{n-1}V.
\]
The equality holds iff $M$ is isometric to an Euclidean ball.
\end{theorem}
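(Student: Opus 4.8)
The plan is to apply Reilly's formula, exactly as in the proof of Proposition \ref{Reilly}, but to the solution of the torsion-type Dirichlet problem $\Delta u=1$ on $M$ with $u=0$ on $\partial M$. First I would solve this linear problem (a unique smooth solution exists by standard elliptic theory). Since $u$ vanishes on the boundary, its tangential gradient and tangential Laplacian vanish there, so with $f=u|_{\Sigma}=0$ we have $\Delta_{\Sigma}f=0$ and $\nabla f=0$. Writing $\chi=\partial u/\partial\nu$, Reilly's formula then collapses to
\[
\int_{M}\left[1-\left\vert D^{2}u\right\vert^{2}-Ric\left(\nabla u,\nabla u\right)\right]=\int_{\partial M}H\chi^{2}\,d\sigma.
\]

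Next I would estimate the left-hand side. By the trace Cauchy--Schwarz inequality $\left\vert D^{2}u\right\vert^{2}\geq\left(\Delta u\right)^{2}/n=1/n$, and by hypothesis $Ric(\nabla u,\nabla u)\geq0$, so the left side is at most $\frac{n-1}{n}V$; hence $\int_{\partial M}H\chi^{2}\,d\sigma\leq\frac{n-1}{n}V$. Independently, integrating $\Delta u=1$ and using the divergence theorem gives $\int_{\partial M}\chi\,d\sigma=V$. A Cauchy--Schwarz inequality on the boundary then yields
\[
V^{2}=\left(\int_{\partial M}H^{1/2}\chi\cdot H^{-1/2}\,d\sigma\right)^{2}\leq\int_{\partial M}H\chi^{2}\,d\sigma\cdot\int_{\partial M}\frac{1}{H}\,d\sigma\leq\frac{n-1}{n}V\int_{\partial M}\frac{1}{H}\,d\sigma,
\]
which rearranges to the desired $\int_{\partial M}H^{-1}\,d\sigma\geq\frac{n}{n-1}V$.

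For the equality case I would trace back which inequalities are saturated: $D^{2}u=\frac{1}{n}g$ (the Hessian is pure trace), $Ric(\nabla u,\nabla u)\equiv0$, and equality in the boundary Cauchy--Schwarz forcing $H\chi$ to be constant. The crux is to extract a Euclidean ball from $D^{2}u=\frac{1}{n}g$. Differentiating gives $\nabla\left(\left\vert\nabla u\right\vert^{2}-\tfrac{2}{n}u\right)=0$, so $\left\vert\nabla u\right\vert^{2}=\frac{2}{n}\left(u-\min u\right)$; since $\Delta u=1>0$ the minimum $-m$ is attained at an interior critical point $p$, and integrating $D^{2}u=\frac{1}{n}g$ along unit-speed geodesics emanating from $p$ gives $u=-m+\frac{1}{2n}r^{2}$, where $r$ denotes the Riemannian distance to $p$. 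The boundary $\{u=0\}$ is then the geodesic sphere of radius $R=\sqrt{2nm}$, and in geodesic polar coordinates $g=dr^{2}+g_{r}$ the Hessian equation forces $\partial_{r}g_{r}=\frac{2}{r}g_{r}$; together with smoothness at $p$ this integrates to $g=dr^{2}+r^{2}g_{S^{n-1}}$, the flat metric on a ball of radius $R$.

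The subtle point I expect to require the most care is controlling the cut locus: one must verify that the geodesics from $p$ remain minimizing out to $\partial M$ and that $r$ stays smooth on the interior, so that the polar-coordinate representation is valid on all of $M$ rather than merely near $p$. The converse direction is then a short verification: on a Euclidean ball of radius $R$ one has $H\equiv(n-1)/R$, $\left\vert\partial M\right\vert=\left\vert\mathbb{S}^{n-1}\right\vert R^{n-1}$ and $V=\left\vert\mathbb{S}^{n-1}\right\vert R^{n}/n$, from which $\int_{\partial M}H^{-1}\,d\sigma=\frac{n}{n-1}V$, so equality indeed holds.
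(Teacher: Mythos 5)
Your proof is correct and is precisely the argument the paper alludes to: the paper states this theorem without proof, citing Ros and remarking only that it ``was also proved by Reilly's formula,'' and your application of Reilly's identity to the torsion problem $\Delta u=1$, $u|_{\partial M}=0$, followed by the trace inequality $|D^2u|^2\geq(\Delta u)^2/n$, the divergence theorem, and Cauchy--Schwarz on the boundary, is the standard Reilly--Ros proof. Your treatment of the equality case (integrating $D^2u=\tfrac1n g$ from the interior minimum, with $|\nabla r|\equiv 1$ following from the first integral $|\nabla u|^2=\tfrac2n(u+m)$ to control the cut locus) is likewise the standard rigidity argument.
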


We can now prove the following result in dimension 3.

\begin{theorem}
\label{nn3}Let $(M^{3},g)$ be a compact Riemannian manifold with boundary
$\Sigma$. Suppose $Ric(g)\geq0$ and $\Pi\geq g|_{\Sigma}$. Then

\begin{itemize}
\item $A\left(  \Sigma\right)  \leq4\pi$;

\item $V\left(  M\right)  \leq4\pi/3$.
\end{itemize}

Moreover if equality holds in either case, $M$ is isometric to the unit ball
$\overline{\mathbb{B}^{3}}\subset\mathbb{R}^{3}$.
\end{theorem}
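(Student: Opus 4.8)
The plan is to combine the topological result of Corollary \ref{top3}, the eigenvalue estimate of Proposition \ref{Reilly} specialized to $n=3$, and Hersch's classical inequality for the first eigenvalue on a topological two-sphere. First observe that the hypothesis $\Pi\geq g|_{\Sigma}$ means the principal curvatures $\kappa_{1},\kappa_{2}$ of $\Sigma$ satisfy $\kappa_{i}\geq1$; in particular $H=\kappa_{1}+\kappa_{2}\geq2$ and $\Sigma$ is strictly convex. By Corollary \ref{top3}, $\Sigma$ is diffeomorphic to $S^{2}$. By Proposition \ref{Reilly} with $n=3$ we have $\lambda_{1}(\Sigma)\geq2$. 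Since $\Sigma$ is a topological sphere, Hersch's inequality gives $\lambda_{1}(\Sigma)\,A(\Sigma)\leq8\pi$. Combining these, $A(\Sigma)\leq 8\pi/\lambda_{1}(\Sigma)\leq4\pi$, which is the first assertion.

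For the volume bound I would invoke Ros's theorem (Theorem \ref{Ros}) with $n=3$, namely $\int_{\Sigma}H^{-1}\,d\sigma\geq\frac{3}{2}V$. Since $H\geq2$ we have $H^{-1}\leq\tfrac12$, so $\int_{\Sigma}H^{-1}\,d\sigma\leq\tfrac12 A(\Sigma)\leq2\pi$ by the area bound just obtained. Therefore $V\leq\frac{2}{3}\int_{\Sigma}H^{-1}\,d\sigma\leq\frac{4\pi}{3}$, which is the second assertion. Note that this chain also records the sharper intermediate inequality $V\leq\frac{1}{3}A(\Sigma)$.

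The equality analysis is clean, and both cases funnel through the same rigidity. If $A(\Sigma)=4\pi$, then $4\pi=A(\Sigma)\leq 8\pi/\lambda_{1}(\Sigma)$ forces $\lambda_{1}(\Sigma)=2$, and the equality case of Proposition \ref{Reilly} identifies $(M,g)$ with the unit ball $\overline{\mathbb{B}^{3}}\subset\mathbb{R}^{3}$. If instead $V=4\pi/3$, then the chain $V\leq\frac{1}{3}A(\Sigma)\leq\frac{4\pi}{3}$ forces $A(\Sigma)=4\pi$, and we conclude as before; conversely $\overline{\mathbb{B}^{3}}$ plainly realizes both equalities. Thus the rigidity needed is exactly Xia's, not that of Hersch.

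The main obstacle is that the most tempting elementary route fails. One would like to integrate the Gauss equation $K_{\Sigma}=\mathrm{Sec}_{M}(T\Sigma)+\kappa_{1}\kappa_{2}$ over $\Sigma$ and apply Gauss--Bonnet, using $\kappa_{1}\kappa_{2}\geq1$ to get $K_{\Sigma}\geq1$ and hence $A(\Sigma)\leq\int_{\Sigma}K_{\Sigma}=4\pi$. But $Ric\geq0$ gives no lower bound on the ambient sectional curvature in dimension $3$: indeed $\mathrm{Sec}_{M}(T\Sigma)=\tfrac12 R-Ric(\nu,\nu)$, which may be negative even though $R\geq0$ and $Ric(\nu,\nu)\geq0$, so one cannot secure $K_{\Sigma}\geq1$ pointwise. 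The real content of the argument is therefore to replace this broken pointwise comparison by the spectral estimate $\lambda_{1}\geq2$ and feed it into Hersch's inequality; the only nontrivial external input is Hersch's theorem, and everything else reduces to the results already established above.
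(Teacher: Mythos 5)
Your proposal is correct and follows essentially the same route as the paper: Corollary \ref{top3} to identify $\Sigma$ as a topological sphere, Proposition \ref{Reilly} for $\lambda_{1}(\Sigma)\geq 2$, Hersch's inequality for the area bound with rigidity funneled through Xia's equality case, and Ros's theorem combined with $H\geq 2$ for the volume bound. The paper leaves the second part as "easily follows" and you have simply filled in the same short computation it intends.
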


\begin{proof}
By Proposition \ref{Reilly} we have $\lambda_{1}\left(  \Sigma\right)  \geq2
$. By Corollary \ref{top3} $\Sigma$ is topologically $\mathbb{S}^{2}$. Then by
a theorem of Hersch \cite{H} (see also \cite[page 135]{SY}) $A\left(
\Sigma\right)  \leq8\pi/\lambda_{1}\left(  \Sigma\right)  $ and moreover
equality holds iff $\Sigma$ is a round sphere. By Proposition \ref{Reilly} we
have $\lambda_{1}\left(  \Sigma\right)  \geq2$. Therefore $A\left(
\Sigma\right)  \leq4\pi$. If equality holds, then $\lambda_{1}\left(
\Sigma\right)  =2$ and hence $M$ is isometric to $\overline{\mathbb{B}^{3}}$
by the rigidity part of Proposition \ref{Reilly}.

The 2nd part easily follows from combining the first part and Theorem
\ref{Ros}.
\end{proof}

\begin{example}
Let $\left(  S^{n-2},h\right)  $ be compact Riemannian manifold with
nonnegative Ricci curvature. Then $\overline{B^{2}}\times S$ has nonnegative
Ricci curvature and the boundary has mean curvature $H=1$. This show that the
conjecture is not true if the condition on 2nd fundamental form is weakened to
a condition on the mean curvature.
\end{example}

\bigskip In the case of positive Ricci curvature we make the following

\begin{conjecture}
\label{conj+}Let $\left(  M^{n},g\right)  $ be a compact Riemannian manifold
with $Ric\geq n-1$ and $\Pi\geq0$ on $\Sigma=\partial M$. Then
\[
\left\vert \Sigma\right\vert \leq\left\vert \mathbb{S}^{n-1}\right\vert .
\]
Moreover if equality holds then $\left(  M^{n},g\right)  $ is isometric to the
hemisphere $\mathbb{S}_{+}^{n}=\{x\in\mathbb{R}^{n+1}:|x|=1,x_{n+1}%
\geq0\}\subset\mathbb{R}^{n+1}$.
\end{conjecture}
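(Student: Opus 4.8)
The plan is to reduce the area bound, at least in dimension three, to a \emph{sharp} first-eigenvalue estimate for the boundary, mirroring the proof of Theorem \ref{nn3} in the nonnegative Ricci case. There the three ingredients were a topological fact ($\Sigma$ is a sphere), a spectral fact ($\lambda_1(\Sigma)\geq n-1$), and Hersch's inequality. I would try to assemble the same three ingredients under the present hypotheses $Ric\geq n-1$ and $\Pi\geq 0$. The topology comes for free: by Proposition \ref{vanishing_nr}, positive Ricci curvature with convex boundary gives $H^1(M)=0$, and feeding this into the long exact sequence of the pair $(M,\Sigma)$ together with Poincar\'e--Lefschetz duality $H^2(M,\Sigma)\cong H^1(M)=0$, exactly as in Corollary \ref{top3}, forces $H^1(\Sigma)=0$, so in dimension three $\Sigma$ is a topological sphere.

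With $\Sigma\cong\mathbb{S}^2$, Hersch's theorem yields $A(\Sigma)\leq 8\pi/\lambda_1(\Sigma)$, with equality iff $\Sigma$ is a round sphere. Hence to reach $A(\Sigma)\leq 4\pi=\lvert\mathbb{S}^2\rvert$ I would need the bound $\lambda_1(\Sigma)\geq 2=n-1$. This is precisely where the difficulty concentrates, and I expect it to be the main obstacle. Proposition \ref{lam+} only delivers $\lambda_1(\Sigma)\geq (n-1)/2$, which in dimension three is $1$ and gives the far weaker $A(\Sigma)\leq 8\pi$. The Reilly-formula argument there loses a factor of two because it discards the nonnegative terms $\int_\Sigma H\chi^2$ and $\int_\Sigma\Pi(\nabla f,\nabla f)$; a sharp argument would have to exploit exactly these terms, presumably guided by the equality configuration of the model, namely the equator of $\mathbb{S}_{+}^{n}$, which is totally geodesic with $\lambda_1=n-1$. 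As the author notes in the remark after Proposition \ref{lam+}, the sharp value $n-1$ is itself only conjectural, and it sits in the same circle of ideas as the Choi--Wang lower bound $(n-1)/2$ versus Yau's conjectured $n-1$ for minimal hypersurfaces in $\mathbb{S}^n$; so I do not anticipate an elementary resolution.

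Granting the sharp spectral bound, the rigidity statement should follow by tracing equalities. If $A(\Sigma)=4\pi$, then equality in Hersch forces $\Sigma$ to be a round $\mathbb{S}^2$ with $\lambda_1=2$, so equality must also hold throughout the sharp Reilly estimate. Unwinding that equality case in the style of Proposition \ref{Reilly} — where the conditions $D^2u=0$, $\chi=f$ and $\Pi=g|_\Sigma$ pin down the geometry — should force $M$ to be isometric to $\mathbb{S}_{+}^{3}$, with the isometry furnished by the harmonic replacements of a standard basis of first eigenfunctions on the equatorial $\mathbb{S}^2$. The delicate point will be that the positive-curvature Reilly identity does not immediately produce the clean equality conditions of the flat case, so the equality analysis is genuinely coupled to whatever argument establishes the sharp inequality.

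In dimensions $n\geq 4$, Hersch is unavailable and I would instead attempt the conformal route used to pass from Conjecture \ref{mainconj} to Conjecture \ref{bsnn}: introduce an Escobar-type boundary functional whose sharp value is realized on $\mathbb{S}_{+}^{n}$ and whose minimizers solve a semilinear Neumann problem, then prove the corresponding uniqueness/rigidity theorem (the positive-Ricci analogue of Conjecture \ref{mainconj}), forcing the minimizer to be constant and hence yielding $\lvert\Sigma\rvert\leq\lvert\mathbb{S}^{n-1}\rvert$. Identifying the correct model trace inequality for the hemisphere and proving the associated uniqueness statement is, however, at least as hard as the spectral gap above, so I regard the full conjecture as out of reach of these methods without new input; the realistic target of this plan is the dimension-three case, conditional on the sharp eigenvalue estimate $\lambda_1(\Sigma)\geq n-1$.
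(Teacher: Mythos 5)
The statement you were asked to prove is an open conjecture in the paper (Conjecture \ref{conj+}); the paper offers no proof of it, only partial evidence, so there is no complete argument to compare yours against. Your plan is an accurate reconstruction of the circle of ideas the paper actually uses: the unconditional portion of your dimension-three argument (boundary is a topological sphere via Proposition \ref{vanishing_nr} and the exact sequence of the pair, Proposition \ref{lam+} giving $\lambda_1(\Sigma)\geq (n-1)/2=1$, then Hersch) reproduces precisely the paper's final Proposition, namely $A(\Sigma)\leq 8\pi$ when $n=3$. You also correctly localize the remaining difficulty in the sharp spectral bound $\lambda_1(\Sigma)\geq n-1$ under $Ric\geq n-1$ and $\Pi\geq 0$, which the author himself states only as a conjecture in the remark following Proposition \ref{lam+}, in explicit analogy with Yau's conjecture versus the Choi--Wang bound. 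So the gap you identify is the genuine gap; your proposal is conditional and does not close it, but neither does the paper.

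Two remarks on what the paper does that you do not. First, the rigidity step in your conditional $n=3$ argument is simpler than you make it: equality in Hersch forces $\Sigma$ to be isometric to the unit round $\mathbb{S}^{2}$, at which point Theorem \ref{HWr} applies verbatim ($Ric\geq n-1$, boundary isometric to the standard sphere, $\Pi\geq 0$) and yields that $M$ is isometric to $\mathbb{S}^{3}_{+}$, with no need to unwind the equality case of a hypothetical sharp Reilly estimate. Second, the paper's main unconditional evidence for Conjecture \ref{conj+} is a different and more elementary route you do not mention: under the stronger hypothesis $\sec\geq 1$, the Gauss equation together with $\Pi\geq 0$ gives $\sec_{\Sigma}\geq 1$, Bishop--Gromov volume comparison then gives $\left\vert\Sigma\right\vert\leq\left\vert\mathbb{S}^{n-1}\right\vert$ in all dimensions, and the equality case again reduces to Theorem \ref{HWr}. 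That argument avoids spectral theory and topology entirely, at the cost of strengthening the curvature hypothesis from Ricci to sectional.
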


\bigskip In \cite{HW} the following rigidity result was established.

\begin{theorem}
\label{HWr}Let $\left(  M^{n},g\right)  $ ($n\geq2$) be a compact Riemannian
manifold with nonempty boundary $\Sigma=\partial M$. Suppose

\begin{itemize}
\item \textrm{$Ric$}$\geq\left(  n-1\right)  g,$

\item $\left(  \Sigma,g|_{\Sigma}\right)  $ is isometric to the standard
sphere $\mathbb{S}^{n-1}\subset\mathbb{R}^{n}$,

\item $\Sigma$ is convex in $M$ in the sense that its second fundamental form
is nonnegative.
\end{itemize}

Then $\left(  M,g\right)  $ is isometric to the hemisphere $\mathbb{S}_{+}%
^{n}$.
\end{theorem}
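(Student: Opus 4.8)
The plan is to exploit that an isometric copy of the round $\mathbb{S}^{n-1}$ carries, as its first eigenfunctions, the coordinate functions $f_1,\dots,f_n$, which obey the full Obata equation $\nabla^2_\Sigma f_i=-f_i\,g_\Sigma$ (not merely $\Delta_\Sigma f_i=-(n-1)f_i$), together with $\sum_i f_i^2\equiv 1$ and $\sum_i\nabla_\Sigma f_i\otimes\nabla_\Sigma f_i=g_\Sigma$; in particular $\sum_i\Pi(\nabla_\Sigma f_i,\nabla_\Sigma f_i)=\operatorname{tr}_\Sigma\Pi=H$. On the model $\mathbb{S}^n_+$ the ambient coordinates $x_1,\dots,x_n$ are eigenfunctions of eigenvalue $n$ satisfying the global Obata equation $\nabla^2 x_i=-x_i\,g$, with $\partial_\nu x_i=0$ and $x_i|_\Sigma=f_i$. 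Thus the goal is to manufacture on $M$ nonconstant solutions of $\nabla^2 u=-u\,g$ and then quote Obata-type rigidity.

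First I would record a Dirichlet eigenvalue bound. Applying Reilly's formula to a first Dirichlet eigenfunction $w$ ($\Delta w=-\lambda w$, $w|_\Sigma=0$) and using $Ric\ge(n-1)$, $|D^2w|^2\ge\frac1n(\Delta w)^2$, and $H\ge 0$ gives $\lambda\ge n$, with equality forcing $\nabla^2 w=-w\,g$. Hence I split into two cases. If $\lambda_1^{\mathrm{Dir}}(M)=n$ the eigenfunction already solves the global Obata equation and I am done. If $\lambda_1^{\mathrm{Dir}}(M)>n$, the Dirichlet problem $\Delta u_i+n u_i=0$, $u_i|_\Sigma=f_i$ is uniquely solvable; set $\chi_i=\partial_\nu u_i$. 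For each $i$ I apply Reilly's formula, rewrite $(\Delta u_i)^2-|D^2u_i|^2=n(n-1)u_i^2-|D^2u_i+u_i g|^2$, estimate $Ric(\nabla u_i,\nabla u_i)\ge(n-1)|\nabla u_i|^2$, and use $\Delta_\Sigma f_i=-(n-1)f_i$; summing over $i$ and inserting the identities above collapses everything to
\[
\sum_i\int_M |D^2u_i+u_i g|^2+\int_\Sigma H\Big(1+\sum_i\chi_i^2\Big)\ \le\ (n-1)\sum_i\int_\Sigma f_i\chi_i .
\]

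The right-hand side is where the round boundary must do its work: I aim to prove $\sum_i\int_\Sigma f_i\chi_i\le 0$, whereupon nonnegativity of the left-hand side forces $D^2u_i=-u_i g$ for every $i$ as well as $H\equiv 0$. Together with $\Pi\ge 0$ and $H=\operatorname{tr}_\Sigma\Pi$ this makes $\Sigma$ totally geodesic, and the global Obata equation then yields, via Obata's theorem for manifolds with boundary, that $(M,g)$ is a spherical cap; the totally geodesic boundary isometric to the unit $\mathbb{S}^{n-1}$ pins the cap down to $\mathbb{S}^n_+$, with $U=(u_1,\dots,u_n)$ furnishing the isometry. To control $\sum_i\int_\Sigma f_i\chi_i$ I would feed the boundary Obata relation into the divergence identity for $T_i:=D^2u_i+u_i g$ (whose trace vanishes and whose boundary blocks compute to $T_i(\nu,\nu)=-H\chi_i$ and $T_i(e_a,e_b)=\Pi_{ab}\chi_i$), and combine it with the convexity $\Pi\ge 0$ and with information on $\psi:=\sum_i u_i^2$ (on the model $\psi=1-x_{n+1}^2\le 1$ with $\partial_\nu\psi=0$ on $\Sigma$).

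The main obstacle is exactly this last sign: the naive Reilly estimate, which uses only $\lambda_1(\Sigma)\ge n-1$, is lossy and cannot by itself pin the boundary term — the same lossiness responsible for the non-sharp constant in Proposition \ref{lam+}. Extracting $\sum_i\int_\Sigma f_i\chi_i\le 0$ is the one place where the hypothesis that $\Sigma$ is exactly the unit round sphere (equivalently, the tangential Obata equation $\nabla^2_\Sigma f_i=-f_i\,g_\Sigma$) is indispensable, and making that step rigorous — rather than the subsequent Obata rigidity, which is standard — is where essentially all the difficulty lies.
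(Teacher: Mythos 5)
The paper does not prove Theorem \ref{HWr} at all; it is imported verbatim from the reference \cite{HW}, so there is no internal argument to measure you against. Judged on its own terms, your proposal is not a proof: it is a correct reduction of the theorem to an inequality that you then explicitly leave unproved. Your Reilly computation is right --- with $\Delta u_i=-nu_i$, $u_i|_\Sigma=f_i$, the rewriting $(\Delta u_i)^2-|D^2u_i|^2=n(n-1)u_i^2-|D^2u_i+u_ig|^2$, the identity $\int_M|\nabla u_i|^2=n\int_Mu_i^2+\int_\Sigma f_i\chi_i$, and the sums $\sum_if_i^2=1$, $\sum_i\Pi(\nabla f_i,\nabla f_i)=H$ do yield
\[
\sum_i\int_M|D^2u_i+u_ig|^2+\int_\Sigma H\Bigl(1+\sum_i\chi_i^2\Bigr)\le(n-1)\sum_i\int_\Sigma f_i\chi_i .
\]
But note what this says: since the left side is manifestly nonnegative (using $H\ge0$ from $\Pi\ge0$), Reilly's formula \emph{proves} $\sum_i\int_\Sigma f_i\chi_i\ge0$. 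The entire content of the theorem has therefore been displaced into the reverse inequality $\sum_i\int_\Sigma f_i\chi_i\le0$, which must hold with equality on the model and is thus necessarily sharp. You offer only a heuristic for it (``feed the boundary Obata relation into the divergence identity for $T_i$''), and that cannot work as stated: the boundary blocks $T_i(\nu,\nu)=-H\chi_i$, $T_i(e_a,e_b)=\Pi_{ab}\chi_i$ that you compute carry no sign information on $\chi_i$ itself, and the divergence identity for the trace-free tensor $T_i$ returns you to the same Reilly bookkeeping you already used. Equivalently, since $n<\lambda_1^{\mathrm{Dir}}(M)$ makes $u_i$ the minimizer of $w\mapsto\int_M(|\nabla w|^2-nw^2)$ over extensions of $f_i$, your missing inequality amounts to exhibiting competitors $w_i$ extending the coordinate functions with $\sum_i\int_M(|\nabla w_i|^2-nw_i^2)\le0$; constructing such $w_i$ (say via the normal exponential map off $\Sigma$ and the comparison geometry coming from $Ric\ge n-1$, $\Pi\ge0$) is a genuinely nontrivial, sharp step, not a routine verification. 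So the one step you flag as ``where essentially all the difficulty lies'' is indeed the whole theorem, and it is missing.

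Two smaller points. In the case $\lambda_1^{\mathrm{Dir}}(M)=n$ you are not immediately ``done'' with a global Obata solution; you need the boundary version of the rigidity (Reilly's theorem: equality in $\lambda_1\ge n$ under $Ric\ge n-1$, $H\ge0$ forces $M\cong\mathbb{S}^n_+$), which should be cited or proved --- the Obata equation $D^2w=-wg$ with $w|_\Sigma=0$ must be combined with an analysis of the level sets to rule out caps other than the hemisphere, though here the hypothesis that $\Sigma$ is the \emph{unit} sphere does that for you. Second, once $D^2u_i=-u_ig$ for all $i$ and $\Sigma$ is totally geodesic, passing to the conclusion that $U=(u_1,\dots,u_n)$ (together with a suitable $(n+1)$-st function) realizes an isometry with $\mathbb{S}^n_+$ still requires an argument that the $u_i$ separate points and that $\sum_iu_i^2+u_{n+1}^2\equiv1$; this is standard but should be written out rather than asserted.
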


Therefore the conjecture, if true, is a far-reaching generalization of the
above rigidity result. When $n=2$ the above theorem can be reformulated as follows.

\begin{theorem}
Let $(M^{2},g)$ be compact surface with boundary and the Gaussian curvature
$K\geq1.$ Suppose the geodesic curvature $k$ of the boundary $\gamma$
satisfies $k\geq0$. Then $L(\gamma)\leq2\pi$. Moreover equality holds iff
$(M,g)$ is isometric to $\mathbb{S}_{+}^{2}$.
\end{theorem}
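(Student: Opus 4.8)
The plan is to combine the Gauss--Bonnet theorem with a Jacobi field (Fermi coordinate) comparison along geodesics normal to the boundary. First I would pin down the topology. Since $K\geq 1>0$ we have $\int_M K\,dA\geq\mathrm{Area}(M)>0$, while $\int_\gamma k\geq 0$, so Gauss--Bonnet gives $2\pi\chi(M)=\int_M K\,dA+\int_\gamma k\,ds>0$. As $\chi\leq 1$ for any compact surface with nonempty boundary, this forces $\chi(M)=1$, so $M$ is a disk and $\gamma$ is a single circle of length $L$ (this also follows from Proposition \ref{vanishing_nr}). I then parametrize $\gamma$ by arc length $t\in[0,L]$ and introduce Fermi coordinates $(t,\rho)$, with $\rho$ the distance to $\gamma$ along the inward unit normal. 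The metric is $d\rho^2+f(\rho,t)^2\,dt^2$, and the width function $f$ satisfies the Jacobi equation
\[
f_{\rho\rho}+Kf=0,\qquad f(0,t)=1,\qquad f_\rho(0,t)=-k(t),
\]
on $[0,c(t))$, where $c(t)$ is the distance from $\gamma(t)$ to the cut locus of $\gamma$ along the normal geodesic.

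The heart of the argument is an exact identity. Since $Kf=-f_{\rho\rho}$ and the Fermi map is a diffeomorphism onto $M$ minus the (measure zero) cut locus,
\[
\int_M K\,dA=\int_0^L\!\int_0^{c(t)}(-f_{\rho\rho})\,d\rho\,dt=\int_0^L\big(-f_\rho(c(t),t)-k(t)\big)\,dt .
\]
Comparing with Gauss--Bonnet $\int_M K=2\pi-\int_\gamma k$ yields the clean identity $\int_0^L\big(-f_\rho(c(t),t)\big)\,dt=2\pi$. Thus the theorem reduces to showing $-f_\rho(c(t),t)\geq 1$, at least on average. For this I would use the comparison built into the Jacobi equation: since $f\geq 0$ and $K>0$ we have $f_{\rho\rho}\leq 0$, so $f_\rho$ is nonincreasing and hence $f_\rho\leq f_\rho(0,\cdot)=-k\leq 0$ on $[0,c(t))$. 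Consequently the energy $E:=f_\rho^2+f^2$ obeys $E'=2f f_\rho(1-K)\geq 0$ using $K\geq 1$, so $E$ is nondecreasing with $E(0)=1+k(t)^2\geq 1$. At a focal cut point, where $f(c(t),t)=0$, this gives $f_\rho(c(t),t)^2=E(c(t))\geq 1$, whence $-f_\rho(c(t),t)\geq 1$.

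The main obstacle is the non-focal part of the cut locus, where $f(c(t),t)>0$ and $E(c(t))\geq 1$ only yields $f_\rho(c(t),t)^2\geq 1-f(c(t),t)^2$, possibly weaker than $1$. Here I would exploit that a non-focal cut point $p$ is reached by (generically) exactly two minimizing normal geodesics, from $\gamma(t_1)$ and $\gamma(t_2)$, arriving with distinct inward unit tangents, so that $\nabla\rho$ jumps by a positive angle across the cut locus. Organizing the integral $\int(-f_\rho(c(t),t))\,dt$ over the non-focal set as an integral over the medial axis carrying its two sheets of preimages, the angle between the two arriving directions accounts exactly for the pointwise deficit, so the paired contribution still dominates the length of the corresponding portion of $\gamma$. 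Equivalently, one may apply Gauss--Bonnet to the inner domains $M_s=\{\rho\geq s\}$ at regular values $s$ and collect the exterior-angle (corner) terms produced along the cut locus, which carry the favorable sign. Either way one obtains $\int_0^L(-f_\rho(c(t),t))\,dt\geq L$, and combined with the identity this gives $L\leq 2\pi$. I expect this cut-locus bookkeeping to be the technically delicate step.

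Finally, for rigidity, equality $L=2\pi$ forces $-f_\rho(c(t),t)=1$ almost everywhere and no angle deficit, hence no non-focal cut points and $E\equiv 1$ along every normal geodesic. Then $E(0)=1$ gives $k\equiv 0$, so $\gamma$ is a closed geodesic, and $E'\equiv 0$ forces $K\equiv 1$; solving the Jacobi equation gives $f(\rho,t)=\cos\rho$ with a single focal cut point at $\rho=\pi/2$. The metric $d\rho^2+\cos^2\rho\,dt^2$ with $t\in[0,2\pi]$ and $\rho\in[0,\pi/2]$ is precisely that of $\mathbb{S}^2_+$, so $(M,g)$ is isometric to the hemisphere.
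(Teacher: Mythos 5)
Your setup is correct and attractive: in Fermi coordinates the identity $\int_0^L\bigl(-f_\rho(c(t),t)\bigr)\,dt=2\pi$ does follow from Gauss--Bonnet and the coarea decomposition, the energy $E=f_\rho^2+f^2$ is nondecreasing when $K\ge1$, and the focal part of the cut locus is handled correctly. The genuine gap is the non-focal part, and the sketch you give does not close it. The assertion that ``the angle between the two arriving directions accounts exactly for the pointwise deficit'' is false pointwise: writing $|dt_i|=\sin\theta\,du/f^{(i)}$ along an edge of the medial axis, the paired contribution to $\int(-f_\rho(c(t),t))\,dt$ is $\sum_{i}(-f_\rho^{(i)})\,\sin\theta\,du/f^{(i)}$ while the corresponding boundary length is $\sum_i \sin\theta\,du/f^{(i)}$, and the only available local bound, $-f_\rho^{(i)}\ge\sqrt{1+k^2-(f^{(i)})^2}$, can be strictly less than $1$. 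This really happens: take the lens $B(p_1,r)\cap B(p_2,r)\subset\mathbb{S}^2$ with $d(p_1,p_2)=2\delta$ and corners smoothed keeping $k\ge0$; along the normal geodesic through the midpoint of the medial axis one computes $f(\rho)=\sin(r-\rho)/\sin r$, so $-f_\rho(c)=\cos\delta/\sin r<1$ whenever $\delta>\pi/2-r$ (e.g.\ $r=\pi/3$, $\delta=\pi/4$). So on a set of positive measure the local contribution is strictly smaller than the boundary length it must dominate. Your fallback -- Gauss--Bonnet on $M_s=\{\rho\ge s\}$ collecting the corner angles -- does not rescue this: differentiating in $s$ and summing the jumps, the total exterior-angle term telescopes away and the computation merely re-derives the identity $\int(-f_\rho(c(t),t))\,dt=2\pi$; it produces no comparison with $L$. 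Since $\int_0^L(-f_\rho(c(t),t))\,dt\ge L$ is exactly equivalent to the theorem, the missing step is the whole theorem; a correct completion would need a genuinely global argument along the medial axis (its leaves are focal points, where $-f_\rho\ge1$, and one must propagate that gain along each edge against the losses in the middle), which you have not supplied. The rigidity discussion inherits this gap.

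For comparison, the paper does not prove the statement by any such direct computation: it presents it as the $n=2$ case of the rigidity theorem of Hang and Wang (Theorem \ref{HWr}) and refers to \cite{HW}. The inequality follows from that rigidity statement by scaling: if $L>2\pi$, rescale by $\lambda=2\pi/L<1$ so the boundary becomes isometric to the unit circle while the rescaled curvature satisfies $\tilde K\ge\lambda^{-2}>1$ and $k\ge0$ persists; Theorem \ref{HWr} then forces the rescaled manifold to be the round hemisphere, contradicting $\tilde K>1$. The equality case is Theorem \ref{HWr} verbatim. If you want a self-contained argument along your lines, you must either supply the global cut-locus bookkeeping or switch to this scaling reduction.
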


It implies a classic result of Toponogov \cite{T}:

\begin{quotation}
Let $(M^{2},g)$ be a closed surface with Gaussian curvature $K\geq1$. Then any
simple closed geodesic in $M$ has length at most $2\pi$. Moreover if there is
one with length $2\pi$, then $M$ is isometric to the standard sphere
$\mathbb{S}^{2}$.
\end{quotation}

We refer to \cite{HW} for more details. In view of this connection, Conjecture
\ref{conj+} can be viewed as a generalization of Toponogov's theorem in higher
dimensions. We note that Marques and Neves \cite{MN} offered a generalization
of Toponogov's theorem in dimension 3 in terms of the scalar curvature.

As an evidence for Conjecture \ref{conj+} we show that it is true under the
stronger condition that sectional curvatures are at least one.

\begin{proposition}
Let $\left(  M^{n},g\right)  $ be a compact Riemannian manifold with $\sec
\geq1$ and $\Pi\geq0$ on $\Sigma=\partial M$. Then
\[
\left\vert \Sigma\right\vert \leq\left\vert \mathbb{S}^{n-1}\right\vert .
\]
Moreover if equality holds then $\left(  M^{n},g\right)  $ is isometric to the
hemisphere $\mathbb{S}_{+}^{n}=\{x\in\mathbb{R}^{n+1}:|x|=1,x_{n+1}%
\geq0\}\subset\mathbb{R}^{n+1}$.
\end{proposition}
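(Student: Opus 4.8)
The plan is to push everything onto the \emph{intrinsic} geometry of the boundary via the Gauss equation, so that the inequality reduces to the classical Bishop volume comparison, and then to dispatch the equality case using the rigidity result already recorded as Theorem \ref{HWr}.

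First I would check that $\Sigma$ is connected, since Bishop's theorem only controls each component separately. Suppose $\Sigma$ had two distinct components $\Sigma_1,\Sigma_2$ and let $\gamma:[0,\ell]\to M$ be a minimizing geodesic realizing $d(\Sigma_1,\Sigma_2)>0$; it meets both components orthogonally and is interior in between. For a parallel unit field $E$ along $\gamma$ with $E\perp\dot\gamma$ (so that $E(0)\in T\Sigma_1$ and $E(\ell)\in T\Sigma_2$), the second variation of length with endpoints constrained to the two components is
\[
I(E,E)=\int_0^\ell\left(|E'|^2-\sec(E,\dot\gamma)\right)dt-\Pi\bigl(E(\ell),E(\ell)\bigr)-\Pi\bigl(E(0),E(0)\bigr).
\]
Since $E$ is parallel we have $E'=0$, while $\sec\geq1$ and $\Pi\geq0$ make every term nonpositive, giving $I(E,E)\leq-\ell<0$ and contradicting minimality. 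Hence $\Sigma$ is connected; this is a Frankel-type argument.

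The crucial step is the Gauss equation. For an orthonormal pair $X,Y$ tangent to $\Sigma$,
\[
K_\Sigma(X,Y)=K_M(X,Y)+\Pi(X,X)\Pi(Y,Y)-\Pi(X,Y)^2 .
\]
As $\Pi\geq0$ is positive semidefinite, Cauchy--Schwarz gives $\Pi(X,Y)^2\leq\Pi(X,X)\Pi(Y,Y)$, so $K_\Sigma\geq K_M\geq1$. Thus the closed $(n-1)$-manifold $(\Sigma,g|_\Sigma)$ has sectional curvature $\geq1$, hence $Ric_\Sigma\geq(n-2)\,g|_\Sigma$. The Bishop volume comparison theorem applied to this closed manifold then yields $|\Sigma|\leq|\mathbb{S}^{n-1}|$, which is the asserted inequality.

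For equality I would invoke the rigidity form of Bishop's theorem: $|\Sigma|=|\mathbb{S}^{n-1}|$ forces $(\Sigma,g|_\Sigma)$ to be isometric to the round sphere $\mathbb{S}^{n-1}$. Since $\sec\geq1$ implies $Ric\geq(n-1)g$ and $\Pi\geq0$ by hypothesis, all three hypotheses of Theorem \ref{HWr} then hold, and we conclude $(M,g)$ is isometric to the hemisphere $\mathbb{S}^n_+$. The inequality itself is immediate once the Gauss equation is in hand, so the genuine content lies in the equality case; the only delicate points are establishing connectedness (otherwise the area bound can fail over several components) and quoting the \emph{sharp} Bishop rigidity, after which the hard interior rigidity is outsourced to Theorem \ref{HWr}.
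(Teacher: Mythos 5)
Your argument is essentially the paper's own proof: the Gauss equation together with $\Pi\geq0$ gives $\sec_\Sigma\geq1$, Bishop's volume comparison yields $\left\vert \Sigma\right\vert \leq\left\vert \mathbb{S}^{n-1}\right\vert$, and in the equality case Bishop rigidity makes $\Sigma$ a round sphere so that Theorem \ref{HWr} finishes the job. Your preliminary Frankel-type second-variation argument for connectedness of $\Sigma$ is a correct and worthwhile extra precaution (the paper tacitly treats the boundary as connected, and without connectedness the componentwise Bishop bound would not immediately give the total area bound), but otherwise the two proofs coincide.
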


\begin{proof}
The proof of the inequality is elementary. By the Gauss equation for any
orthonormal pair $X,Y\in T_{p}\Sigma$%
\begin{align*}
R^{\Sigma}\left(  X,Y,X,Y\right)   &  =R^{\Sigma}\left(  X,Y,X,Y\right)
+\Pi\left(  X,X\right)  \Pi\left(  Y,Y\right)  -\Pi\left(  X,Y\right)  ^{2}\\
&  \geq1+\Pi\left(  X,X\right)  \Pi\left(  Y,Y\right)  -\Pi\left(  X,Y\right)
^{2}.
\end{align*}
Since $\Pi\geq0$ it is a simple algebraic fact that $\Pi\left(  X,X\right)
\Pi\left(  Y,Y\right)  -\Pi\left(  X,Y\right)  ^{2}\geq0$. Therefore
$R^{\Sigma}\left(  X,Y,X,Y\right)  \geq1$, i.e. $\sec_{\Sigma}\geq1$. By the
Bishop-Gromov volume comparison we have $\left\vert \Sigma\right\vert
\leq\left\vert \mathbb{S}^{n-1}\right\vert $.

Moreover if $\left\vert \Sigma\right\vert =\left\vert \mathbb{S}%
^{n-1}\right\vert $, then $\Sigma$ is isometric to $\mathbb{S}^{n-1}$. By
Theorem \ref{HWr} $M$ is isometric to the hemisphere $\mathbb{S}_{+}^{n}$.
\end{proof}

Similarly we have the following parallel result when sectional curvature is nonnegative.

\begin{proposition}
Let $\left(  M^{n},g\right)  $ be a compact Riemannian manifold with $\sec
\geq0$ and $\Pi\geq1$ on $\Sigma=\partial M$. Then
\[
\left\vert \Sigma\right\vert \leq\left\vert \mathbb{S}^{n-1}\right\vert .
\]
Moreover if equality holds then $\left(  M^{n},g\right)  $ is isometric to the
hemisphere $\overline{\mathbb{B}^{n}}=\{x\in\mathbb{R}^{n}:|x|\leq1\}$.
\end{proposition}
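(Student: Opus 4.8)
The plan is to mirror the proof of the preceding proposition, simply interchanging which of the two curvature hypotheses contributes the constant $1$ in the Gauss equation. First I would write, for an orthonormal pair $X,Y\in T_{p}\Sigma$,
\[
R^{\Sigma}(X,Y,X,Y)=R^{M}(X,Y,X,Y)+\Pi(X,X)\Pi(Y,Y)-\Pi(X,Y)^{2},
\]
and discard the first term on the right as nonnegative using $\sec\geq0$. The remaining term I would handle by a linear-algebra observation: letting $A$ denote the $2\times2$ matrix of $\Pi|_{\mathrm{span}\{X,Y\}}$ in the orthonormal basis $\{X,Y\}$, the hypothesis $\Pi\geq1$ says precisely that $A\geq I$, so both eigenvalues of $A$ are at least $1$ and hence $\det A=\Pi(X,X)\Pi(Y,Y)-\Pi(X,Y)^{2}\geq1$. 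This gives $\sec_{\Sigma}\geq1$, hence $Ric_{\Sigma}\geq(n-2)\,g|_{\Sigma}$, and Bishop--Gromov volume comparison then yields $|\Sigma|\leq|\mathbb{S}^{n-1}|$.

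For the equality case I cannot appeal to Theorem \ref{HWr}, since here I have only $Ric\geq0$ on $M$ rather than $Ric\geq(n-1)g$. Instead I would proceed through Xia's eigenvalue estimate. If $|\Sigma|=|\mathbb{S}^{n-1}|$, the volume rigidity in Bishop--Gromov (applied to $\Sigma$ with $Ric_{\Sigma}\geq(n-2)\,g|_{\Sigma}$) forces $\Sigma$ to be isometric to the round unit sphere $\mathbb{S}^{n-1}$; in particular $\lambda_{1}(\Sigma)=n-1$. On the other hand $\sec\geq0$ gives $Ric\geq0$ on $M$, so together with $\Pi\geq1$ the hypotheses of Proposition \ref{Reilly} are met, and that proposition gives $\lambda_{1}(\Sigma)\geq n-1$ with equality only when $(M,g)$ is isometric to the Euclidean unit ball. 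Since we have just shown $\lambda_{1}(\Sigma)=n-1$, we are in the equality case of Proposition \ref{Reilly} and conclude that $M$ is isometric to $\overline{\mathbb{B}^{n}}$.

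The analytic input is entirely borrowed from results already stated, so I anticipate no real difficulty. The single subtle point is the rigidity step: the tempting move is to copy the previous proposition verbatim and invoke the hemisphere rigidity of Theorem \ref{HWr}, but that theorem requires a positive lower Ricci bound we do not have, and the correct model in this regime is the flat unit ball $\overline{\mathbb{B}^{n}}$ supplied by the equality statement of Proposition \ref{Reilly}, not a hemisphere.
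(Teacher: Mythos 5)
Your argument is correct, and for the inequality it is exactly the route the paper intends (the paper states this proposition with no proof beyond the word ``similarly'', pointing at the preceding $\sec\geq1$, $\Pi\geq0$ case): Gauss equation, discard $R^{M}(X,Y,X,Y)\geq0$, and use $\det\bigl(\Pi|_{\mathrm{span}\{X,Y\}}\bigr)\geq1$ from $\Pi\geq I$ to get $\sec_{\Sigma}\geq1$ and then Bishop--Gromov. Where you genuinely depart from a verbatim transcription is the equality case, and you are right to do so: the preceding proposition closes rigidity with Theorem \ref{HWr}, which needs $Ric\geq(n-1)g$ on $M$ and is unavailable here, where one only has $Ric\geq0$. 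Your substitute --- Bishop volume rigidity forces $\Sigma\cong\mathbb{S}^{n-1}$, hence $\lambda_{1}(\Sigma)=n-1$, which puts you in the equality case of Xia's estimate (Proposition \ref{Reilly}) and yields $M\cong\overline{\mathbb{B}^{n}}$ --- is clean, uses only results already stated in the paper, and quietly corrects the statement's typo (``hemisphere'' should of course read ``unit ball''). The alternative would be to invoke a $Ric\geq0$ analogue of Theorem \ref{HWr} (a boundary-rigidity theorem saying that $Ric\geq0$, $\Pi\geq g|_{\Sigma}$ and $\Sigma$ isometric to the round unit sphere force $M$ to be the unit ball), which exists in the literature but is not quoted in this paper; your detour through the first eigenvalue avoids needing it. Two small points you may as well acknowledge: the argument as written requires $n\geq3$ (for $n=2$ the Gauss-equation step is vacuous and one uses Gauss--Bonnet instead), and applying $\lambda_{1}(\Sigma)$ meaningfully presumes $\Sigma$ connected, which follows from $H^{1}(M,\Sigma)=0$ established in Section 3 since $H>0$.
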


\bigskip Using Proposition \ref{lam+} one can easily prove the following by
the same method used to prove Theorem \ref{nn3}.

\begin{proposition}
Let $(M^{3},g)$ be a compact Riemannian manifold with boundary $\Sigma$.
Suppose $Ric(g)\geq2$ and $\Pi\geq0$. Then $A\left(  \Sigma\right)  \leq8\pi$
\end{proposition}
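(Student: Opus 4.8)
The plan is to follow the three-step template used to prove Theorem \ref{nn3}, but to replace each nonnegative-Ricci input by its positive-Ricci counterpart. The three ingredients are: a lower bound on the first eigenvalue of the boundary, the topological fact that $\Sigma$ is a $2$-sphere, and Hersch's inequality relating $\lambda_1$ and the area of a metric $2$-sphere.

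First I would record the eigenvalue bound. Here $n=3$, so the hypotheses $Ric\geq 2=n-1$ and $\Pi\geq 0$ are exactly those of Proposition \ref{lam+}, which gives $\lambda_1(\Sigma)\geq (n-1)/2=1$. This is the only place where the curvature and convexity assumptions enter quantitatively. Note that, unlike Theorem \ref{nn3} (where $\Pi\geq g|_{\Sigma}$ forced $\lambda_1\geq 2$), here we obtain only $\lambda_1\geq 1$, which is why the resulting area bound doubles from $4\pi$ to $8\pi$.

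Next I would establish that $\Sigma$ is topologically $\mathbb{S}^2$. Since $Ric\geq 2>0$ and the boundary is convex ($\Pi\geq 0$), Proposition \ref{vanishing_nr} applies and yields $H^1(M)=0$. Then the same cohomological argument as in Corollary \ref{top3} goes through: the long exact sequence of the pair $(M,\Sigma)$ together with $H^2(M,\Sigma)\cong H^1(M)$ by Poincar\'e--Lefschetz duality sandwiches $H^1(\Sigma)$ between two vanishing groups, forcing $H^1(\Sigma)=0$. A closed surface with vanishing first cohomology is a sphere.

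Finally, since $\Sigma$ is a topological $2$-sphere, Hersch's theorem gives $\lambda_1(\Sigma)\,A(\Sigma)\leq 8\pi$, so $A(\Sigma)\leq 8\pi/\lambda_1(\Sigma)\leq 8\pi$ by the first step. I do not expect any genuine difficulty, since every step quotes a result already available in the paper. The one point requiring care is that the topological input must come from Proposition \ref{vanishing_nr} (positive Ricci, convex boundary) rather than from Corollary \ref{top3}, whose hypothesis of strict convexity is not assumed here; the strict positivity of the Ricci curvature compensates for the weakened convexity. I would not attempt a rigidity statement, as the proposition asserts only the inequality $A(\Sigma)\leq 8\pi$.
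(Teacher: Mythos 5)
Your proposal is correct and is exactly the argument the paper intends when it says the result follows ``by the same method used to prove Theorem \ref{nn3}'': Proposition \ref{lam+} gives $\lambda_1(\Sigma)\geq 1$, the vanishing of $H^1(M)$ from Proposition \ref{vanishing_nr} plus the long exact sequence shows $\Sigma$ is a $2$-sphere, and Hersch's inequality yields $A(\Sigma)\leq 8\pi/\lambda_1(\Sigma)\leq 8\pi$. Your remark that the topological step must invoke Proposition \ref{vanishing_nr} rather than Corollary \ref{top3} (since only $\Pi\geq 0$ is assumed) is a correct and worthwhile point of care.
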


As stated in Conjecture \ref{conj+} the optimal upper bound should be $4\pi$.

\end{document}